\numberwithin{equation}{section}
\theoremstyle{plain}%
  \newtheorem{theorem}{Theorem}[section]
  \newtheorem{corollary}[theorem]{Corollary}
  \newtheorem{proposition}[theorem]{Proposition}
  \newtheorem{lemma}[theorem]{Lemma}
  \newtheorem{example}[theorem]{Example}%
\newtheorem*{mThm}{Main Theorem}
\def\Sym{{\rm Sym\,\,}}%
\def\QSym{{\rm QSym\,\,}}%
\begin{document}
%
\title{Generalized Harmonic Number Sums and Quasi-Symmetric Functions}
\author[K.-W. Chen]{Kwang-Wu Chen}
\address[Kwang-Wu Chen]{Department of Mathematics\\ University of Taipei\\
No. $1$,  Ai-Guo West Road, Taipei $10048$, Taiwan}
\email[Corresponding author]{kwchen@uTaipei.edu.tw}
\today
\maketitle
\begin{abstract}  
We express some general type of infinite series such as
$$
\sum^\infty_{n=1}\frac{F(H_n^{(m)}(z),H_n^{(2m)}(z),\ldots,H_n^{(\ell m)}(z))}
{(n+z)^{s_1}(n+1+z)^{s_2}\cdots (n+k-1+z)^{s_k}},
$$
where $F(x_1,\ldots,x_\ell)\in\mathbb Q[x_1,\ldots,x_\ell]$,
$H_n^{(m)}(z)=\sum^n_{j=1}1/(j+z)^m$, $z\in (-1,0]$,
and $s_1,\ldots,s_k$ are nonnegative integers with $s_1+\cdots+s_k\geq 2$,
as a linear combination of multiple Hurwitz zeta functions 
and some speical values of $H_n^{(m)}(z)$.
\end{abstract}
\noindent{\small {\it Key Words:} 
Symmetric functions, Quasi-symmetric functions, Riemann zeta values, 
Multiple zeta values, Multiple Hurwitz zeta functions.\\
\noindent{\it Mathematics Subject Classification 2010:}
11M32, 11M35, 05E05.}
\section{Introduction}
For $m,n\in\mathbb N$, $z\in\mathbb C$ with $z\neq -1,-2,\ldots,-n$,
the generalized harmonic functions are defined by
$H_n^{(m)}(z)=\sum^n_{k=1}\frac1{(k+z)^m}$. The generalized harmonic function 
$H_n^{(m)}(z)$ is a generalization of the generalized harmonic number $H_n^{(m)}(0)=H_n^{(m)}$.

In this paper we investigate series of the following form
\begin{equation}\label{eq.11}
\sum^\infty_{n=1}\frac{F(H_n^{(m)}(z),H_n^{(2m)}(z),\ldots,H_n^{(\ell m)}(z))}
{(n+z)^{s_1}(n+1+z)^{s_2}\cdots (n+k-1+z)^{s_k}},
\end{equation}
where $F(x_1,\ldots,x_\ell)\in\mathbb Q[x_1,\ldots,x_\ell]$,
$z\in (-1,0]$,
and $s_1,\ldots,s_k$ are nonnegative integers with $s_1+\cdots+s_k\geq 2$.

The famous such formulas:
$$
\sum^\infty_{n=1}\frac{H_n}{(n+1)^2}=\zeta(3),\quad
\mbox{and}\quad
\sum^\infty_{n=1}\frac{H_n}{n^3}=\frac54\zeta(4),
$$
were discovered by Euler in relation to Euler sums,
where the Riemann zeta function is defined by 
$\zeta(s)=\sum^\infty_{n=1}\frac1{n^s}$, $\Re(s)>1$.
Many similar formulas have been established since,
and there is an extensive mathematical literature; see, e.g. \cite{BB, Chu, Doe, FS, Zhe}.

Let $z$ be a real number in $(-1,0]$. The multiple Hurwitz zeta function 
of depth $k$ and weight $s_1+\cdots+s_k$ is defined by
\cite{AI, KM, MV}
$$
\zeta(s_1,\ldots,s_k;z)=\sum_{1\leq n_1<n_2<\cdots<n_k}
\frac{1}{(n_1+z)^{s_1}(n_2+z)^{s_2}\cdots(n_k+z)^{s_k}},
$$
which is absolutely convergent and analytic in the region 
Re$\,(s_\ell+s_{\ell+1}+\cdots+s_k)>k-\ell+1$, for $\ell=1,2,\ldots,k$.
It is well-known that $\zeta(s_1,\ldots,s_k;z)$ has a analytic continuation to $\mathbb C^k$.
Fix a positive integer $n$, the finite multiple Hurwitz zeta function is defined by
$$
\zeta_n(s_1,\ldots,s_k;z)=\sum_{1\leq n_1<n_2<\cdots<n_k\leq n}
\frac{1}{(n_1+z)^{s_1}(n_2+z)^{s_2}\cdots(n_k+z)^{s_k}}.
$$
It is known that multiple zeta values (MZVs) $\zeta(s_1,\ldots,s_k)=\zeta(s_1,\ldots,s_k;0)$ 
are introduced by Hoffman \cite{Hof3} and Zagier \cite{Zag},
and multiple $t$-values (MtVs) $t(s_1,\ldots,s_k)=2^{-(s_1+\cdots+s_k)}\zeta(s_1,\ldots,s_k;-1/2)$
are introduced by Hoffman \cite{Hof4}.

Recently a lot of formulas concerning $O_n^{(m)}=2^{-m}H_n^{(m)}(-1/2)=\sum^n_{k=1}\frac1{(2k-1)^m}$ 
were produced \cite{Chu2, Wang, WC}, for example,
\begin{eqnarray*}
\sum^\infty_{n=1}\frac{O_n^{(2)}}{(2n-1)(2n+1)} &\displaystyle= \frac7{16}\zeta(3)
&=\frac{t(3)}2,\quad\mbox{(\cite[Corollay 3]{Wang})} \\
\sum^\infty_{n=1}\frac{O_n^2}{(2n+1)(2n+3)} &=\displaystyle \frac18+\frac{\pi^2}{32}
&=\frac18+\frac{t(2)}4.\quad\mbox{(\cite[Corollary 4]{WC})}
\end{eqnarray*}
These may be compared with the following identities
\begin{eqnarray*}
\sum^\infty_{n=1}\frac{H_n^{(2)}}{n(n+1)} &=&\zeta(3),\quad\mbox{(\cite[Lemma 1.2]{Sof2})} \\
\sum^\infty_{n=1}\frac{H_n^2}{(n+1)(n+2)} &=&1+\zeta(2).\quad\mbox{(\cite[Corollary 3]{WC})}
\end{eqnarray*}
It appears that all these identities are special cases with $z=0$ and $z=-1/2$ of the following identities:
$$
\sum^\infty_{n=1}\frac{H_n^{(2)}(z)}{(n+z)(n+1+z)}=\zeta(3;z),\quad\mbox{and}\quad
\sum^\infty_{n=1}\frac{H_n(z)^2}{(n+1+z)(n+2+z)}=\frac1{1+z}+\zeta(2;z).
$$

In 2016, Hoffman \cite{Hof1} introduced a class of functions $\eta_{s_1,\ldots,s_k}$
from the quasi-symmetric function $\QSym$ to the reals in deal with the infinite series
$$
\sum^\infty_{n=1}\frac{F(H_n,H_n^{(2)},\ldots,H_n^{(\ell)})}
{n^{s_1}(n+1)^{s_2}\cdots (n+k-1)^{s_k}},
$$
He gave explicitly formulas
for these functions of length $k\leq 3$ and two specific functions 
$\eta_{s_1,\ldots,s_k}(e_\ell)$,
where $(s_1,\ldots,s_k)=(0,\underbrace{1,\ldots,1}_{k-1})$,
or $(\underbrace{1,\ldots,1}_k)$, and
$e_\ell$ is the $\ell$-th elementary symmetric polynomials. 
For example \cite[Theorem 2]{Hof1},
$$
\sum^\infty_{n=1}\frac{P_\ell(H_n,H_n^{(2)},\ldots,H_n^{(\ell)})
P_k(H_n,-H_n^{(2)},\ldots,(-1)^{k+1}H_n^{(k)})}{n(n+1)}={k+\ell+1\choose k+1}\zeta(k+\ell+1),
$$
where $k,\ell$ are nonnegative integrs with $k+\ell\geq 1$, and 
the modified Bell polynomials $P_m(x_1,x_2,\ldots,x_m)$ are 
defined by \cite{Chen,CC}
$\exp\left(\sum^\infty_{k=1}\frac{x_k}{k}z^k\right)
=\sum^\infty_{m=0}P_m(x_1,x_2,\ldots,x_m)z^m$.

Inspired by his work we extend his method to get 
a general explicitly expression of the form with Eq.\,(\ref{eq.11}).
The following is our main theorem.
\begin{mThm}
Let $M_{(\alpha_1,\ldots,\alpha_k)}=\zeta_n(m\alpha_1,\ldots,m\alpha_k;z)$
be the monomial basis for $\QSym$ with $M_0=1$.
For $u\in\QSym$ and $(s_1,\ldots,s_k)\in\mathbb N_0^k$ with $s_1+\cdots+s_k\geq 2$,
the $H$-function 
$$
\eta_{s_1,\ldots,s_k}(u)=\sum^\infty_{n=1}\frac{u}
{(n+z)^{s_1}(n+1+z)^{s_2}\cdots(n+k-1+z)^{s_k}}
$$
can be explicitly expressed as a linear combination of multiple Hurwitz zeta functions
and some speical values of $H_n^{(m)}(z)$.
\end{mThm}

For our convenience, we denote the $k$ repetitions of $a$ in the subscript of 
the $H$-function $\eta$ as $\eta_{\underbrace{a,\ldots,a}_k}=\eta_{a^k}$,
for example, $\eta_{1,1,0,2,2,2}=\eta_{1^2,0,2^3}$.

There are a lot of formulas related to harmonic numbers 
and reciprocal binomial coefficients of the form
$$
\sum^\infty_{n=1}\frac{F(H_n^{(m)},H_n^{(2m)},\ldots,H_n^{(\ell m)})}
{n^p{n+k\choose k}^q},
$$
derived in \cite{SXZ, Sof, Sof2, SS, XZZ}, for example \cite[Eq.\,(1.1)]{Sof2},
$$
\sum^\infty_{n=1}\frac{H_n^{(2)}}{{n+k\choose k}}
=\frac{k}{k-1}\left(\zeta(2)-H_{k-1}^{(2)}\right).
$$
A more complicated formula is given in \cite{SS}
$$
\sum^\infty_{n=1}\frac{H_n}{n^4{n+5\choose 5}}
=3\zeta(5)-\zeta(3)\zeta(2)-\frac{137}{48}\zeta(4)+\frac{12019}{1800}\zeta(3)
-\frac{874853}{216000}\zeta(2)+\frac{131891}{172800}.
$$
We give some more general explicit formulas to obtain all these above
identities as applications of our Main Theorem in the last section.

This paper is organized as follows.
In Section 2, we introduce some preliminaries about quasi-symmetric functions
and some basic lemmas. In Section 3, we first give the general formulas
for $\eta_{0^a,1,0^b,1}$ with nonnegative integers $a, b$. We give some
concret examples in the second part. In Section 4,
we give the general formulas for $\eta_{0^a,p}$ with nonnegative integers $a, p$
and $p\geq 2$. In the last part of this section we give examples using the formulas
of $\eta_{0^a,p}$. Finally, we conclude our main theorem in the end of Section 4.
In the last section we discuss some formulas related to harmonic numbers 
and reciprocal binomial coefficients.

\section{Quasi-Symmetric Functions}
Here we list some preliminaries about quasi-symmetric functions 
and symmetric functions \cite{Hof1, Mac, Sta}. 
Let $\mathbb Q[[x_1,x_2,\ldots]]$ be the set of formal power series of bounded degree. 
An element $u\in\mathbb Q[[x_1,x_2,\ldots]]$ such that the coefficient in $u$
of any monomial $x_{a_1}^{\alpha_1}\cdots x_{a_k}^{\alpha_k}$ with 
$a_1<a_2<\cdots<a_k$ is the same as that of $x_1^{\alpha_1}\cdots x_k^{\alpha_k}$
is called quasi-symmetric. Let \QSym be the subring of all the quasi-symmetric functions.
For any composition (ordered partition) $(\alpha_1,\ldots,\alpha_k)$ of $n$,
we denote $M_{(\alpha_1,\ldots,\alpha_k)}$ as the ``smallest'' quasi-symmetric function
containing $x_1^{\alpha_1}\cdots x_k^{\alpha_k}$.
It is convenient to denote $M_0=1$.

Let \Sym be the subring of $\mathbb Q[[x_1,x_2,\ldots]]$ 
that are invariant under permutations of the $x_i$.
We set $e_m$, $h_m$, and $p_m$ being the $m$-th elementary, complete homogeneous,
and power-sum symmetric polynomials, respectively.
They have associated generating functions
$$\begin{array}{lllllllll}
E(t)&:=&\displaystyle\sum^\infty_{j=0}e_jt^j 
     &=&\displaystyle\prod^\infty_{i=1}(1+tx_i),\\
H(t)&:=&\displaystyle\sum^\infty_{j=0}h_jt^j 
      &=&\displaystyle\prod^\infty_{i=1}\frac 1{1-tx_i} &=& E(-t)^{-1},\\
P(t)&:=&\displaystyle\sum^\infty_{j=1}p_jt^{j-1} 
      &=&\displaystyle\sum^\infty_{i=1}\frac{x_i}{1-tx_i} 
      &=&\displaystyle\frac{H'(t)}{H(t)}
      &=&\displaystyle\frac{E'(-t)}{E(-t)}.
\end{array}$$
Let $N_{n,k}$ be the sum of all the monomial symmetric functions corresponding to 
partitions of $n$ having length $k$. That is,
$$
N_{n,k}=\sum_{\alpha_1+\cdots+\alpha_k=n\atop \alpha_i\geq 1}M_{(\alpha_1,\ldots,\alpha_k)}.
$$
It is known that $p_m=N_{m,1}$ and $e_m=N_{m,m}$.
For our convenience, we let $N_{n,0}=0$,
for $n\geq 1$, and $N_{0,0}=1$. 

Let the modified Bell polynomials $P_m(x_1,x_2,\ldots,x_m)$ be 
defined by \cite{Chen,CC}
$$
\exp\left(\sum^\infty_{k=1}\frac{x_k}{k}z^k\right)
=\sum^\infty_{m=0}P_m(x_1,x_2,\ldots,x_m)z^m.
$$
The general explicit expression for $P_m$ is
$$
P_m(x_1,\ldots,x_m)=
\sum_{k_1+2k_2+\cdots+mk_m=m\atop k_i\geq 0}
\frac 1{k_1!\cdots k_m!}
\left(\frac{x_1}1\right)^{k_1}\cdots
\left(\frac{x_m}m\right)^{k_m}.
$$
Then we have \cite[Lemma 1]{CCE}
\begin{eqnarray*} 
e_k &=& P_k(p_1,-p_2,\ldots,(-1)^{k+1}p_k),\\  
h_k &=& P_k(p_1,p_2,\ldots,p_k).  
\end{eqnarray*}
And for $0\leq k\leq n$ \cite[Lemma 2]{Hof1},
\begin{equation}\label{eq.21}
e_kh_{n-k}=\sum^n_{j=k}{j\choose k}N_{n,j}.
\end{equation}

Let $x_i=1/(i+z)^{m}$, 
for $1\leq i\leq n$; $x_i=0$, for $i>n$, where $m$ is a positive integer. Then
$$
M_{(\alpha_1,\ldots,\alpha_k)}=\zeta_n(m\alpha_1,m\alpha_2,\ldots,m\alpha_k;z),\mbox{ and }
\zeta(m\alpha_1,m\alpha_2,\ldots,m\alpha_k;z)=\lim_{n\rightarrow\infty}M_{(\alpha_1,\ldots,\alpha_k)}.
$$
Thus the $k$-th power-sum, elementary, and complete homogeneous symmetric polynomials 
in \Sym are
\begin{eqnarray*}
p_k &=& H_n^{(km)}(z)\ =\ \sum^n_{\ell=1}\frac1{(\ell+z)^{km}},\\
e_k &=& P_k(H_n^{(m)}(z),-H_n^{(2m)}(z),\ldots, (-1)^{k+1}H_n^{(km)}(z)),\\
h_k &=& P_k(H_n^{(m)}(z),H_n^{(2m)}(z),\ldots,H_n^{(km)}(z)).\\
\end{eqnarray*}
For $u\in\QSym$ and $(s_1,\ldots,s_k)\in\mathbb N_0^k$ with $s_1+\cdots+s_k\geq 2$, we
define the $H$-function $\eta_{s_1,\ldots,s_k}:\QSym\rightarrow\mathbb R$ by
$$
\eta_{s_1,\ldots,s_k}(u)=\sum^\infty_{n=1}\frac{u}{(n+z)^{s_1}(n+1+z)^{s_2}\cdots(n+k-1+z)^{s_k}}.
$$ 
\begin{theorem}
$\eta_{s_1,\ldots,s_k}(u)$ converges for any $u\in\QSym$.
\end{theorem}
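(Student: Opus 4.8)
The plan is to reduce to a single basis element and then apply a direct comparison test. Since every $u\in\QSym$ has bounded degree, it is a finite $\mathbb Q$-linear combination of monomial quasi-symmetric functions $M_{(\alpha_1,\ldots,\alpha_r)}$ (together with $M_0=1$); as $u\mapsto\eta_{s_1,\ldots,s_k}(u)$ is $\mathbb Q$-linear, it suffices to prove convergence when $u=M_{(\alpha_1,\ldots,\alpha_r)}$ for a fixed composition with all $\alpha_i\geq 1$, the case $u=M_0$ reducing to $\sum_n 1/\big((n+z)^{s_1}\cdots(n+k-1+z)^{s_k}\big)$. After the substitution $x_i=1/(i+z)^m$ for $i\leq n$ and $x_i=0$ for $i>n$, the numerator at the $n$-th stage is exactly $\zeta_n(m\alpha_1,\ldots,m\alpha_r;z)$, so it remains to show that
$$\sum^\infty_{n=1}\frac{\zeta_n(m\alpha_1,\ldots,m\alpha_r;z)}{(n+z)^{s_1}(n+1+z)^{s_2}\cdots(n+k-1+z)^{s_k}}$$
converges.

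I would then bound numerator and denominator separately. Because $z\in(-1,0]$ we have $j+z>0$ for every $j\geq 1$, so no factor vanishes, and since $(n+j-1+z)\geq(n+z)$ for $j\geq 1$ the denominator is at least $(n+z)^S$ with $S:=s_1+\cdots+s_k\geq 2$. For the numerator the crucial observation is that $\zeta_n$ grows only poly-logarithmically: dropping the ordering constraint $1\leq n_1<\cdots<n_r\leq n$ (every term being positive) and using $m\alpha_i\geq 1$ together with $1/(j+z)^{m\alpha_i-1}\leq 1/(1+z)^{m\alpha_i-1}$ for $j\geq 1$ gives
$$\zeta_n(m\alpha_1,\ldots,m\alpha_r;z)\leq\prod_{i=1}^r\sum_{j=1}^n\frac1{(j+z)^{m\alpha_i}}\leq C\left(H_n^{(1)}(z)\right)^r,$$
where $C$ absorbs the constants $(1+z)^{-(m\alpha_i-1)}$. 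Since $H_n^{(1)}(z)=\sum_{j=1}^n 1/(j+z)=O(\log n)$ by comparison with $\int_1^{n}dt/(t+z)$, this yields $\zeta_n(m\alpha_1,\ldots,m\alpha_r;z)=O((\log n)^r)$.

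Combining the two estimates, each summand is dominated in absolute value by $C'(\log n)^r/(n+z)^2$, and $\sum_n(\log n)^r/n^2$ converges; hence the series converges absolutely, and summing the finitely many basis contributions completes the proof. The point that requires care — and the reason convergence is not immediate from $S\geq 2$ alone — is that the numerator $u$ is \emph{not} bounded in $n$: when $m=1$ and some $\alpha_i=1$ it genuinely grows like a power of $\log n$. The whole argument therefore hinges on quantifying this growth as sub-polynomial and verifying that the $(n+z)^{-S}$ decay with $S\geq 2$ still dominates it.
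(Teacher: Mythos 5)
Your proof is correct, and it closes the argument by a genuinely different mechanism than the paper. Both proofs begin the same way: reduce by $\mathbb Q$-linearity to a single monomial $M_I$, and exploit that every denominator factor is at least $n+z$ so that only the total weight $s_1+\cdots+s_k\geq 2$ matters. But where you stay with the outer sum over $n$ and quantify the numerator's growth --- dropping the ordering constraint to get $\zeta_n(m\alpha_1,\ldots,m\alpha_r;z)\leq C\bigl(H_n^{(1)}(z)\bigr)^r=O((\log n)^r)$, then comparing with $\sum_n (\log n)^r/n^2$ --- the paper instead interchanges the two summations (justified by positivity, which it uses implicitly just as you use positivity to drop the ordering constraint), bounds the result by $\eta_2(M_I)$, and evaluates $\eta_2(M_I)$ \emph{exactly} as $\zeta(m\alpha_1,\ldots,m\alpha_r+2;z)+\zeta(m\alpha_1,\ldots,m\alpha_r,2;z)$ (Eq.\,\eqref{eq.22}), quoting the known convergence region of multiple Hurwitz zeta functions stated in the introduction. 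Your route buys self-containedness: it does not presuppose convergence of multiple Hurwitz zeta series, and it makes explicit the point you rightly flag, namely that the numerator is unbounded (polylogarithmic when $m=1$) and the $(n+z)^{-2}$ decay must beat it. The paper's route buys economy and reuse: the interchange-of-summation identity behind Eq.\,\eqref{eq.22} is precisely the workhorse of the later explicit evaluations (Theorems \ref{thm.32}, \ref{thm.33}, and \ref{thm.42}), and it yields a closed form for the dominating series rather than an upper bound. One small point of care in your write-up: the inequality $(n+z)^S\geq(n+z)^2$ requires $n+z\geq 1$, which can fail at $n=1$ when $z<0$; since convergence is a tail property this costs nothing, but it deserves a word, or simply note $n+z\geq n-1\geq 1$ for $n\geq 2$.
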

\begin{proof}
For any composition $I=(\alpha_1,\ldots,\alpha_k)$, we have
\begin{eqnarray*}
\lefteqn{\eta_{s_1,\ldots,s_k}(M_I)}\\
&=&\sum^\infty_{n=1}\frac{M_I}{(n+z)^{s_1}(n+1+z)^{s_2}\cdots(n+k-1+z)^{s_k}}\\
&=& \sum_{1\leq n_1<\cdots<n_k}\frac1{(n_1+z)^{m\alpha_1}\cdots(n_k+z)^{m\alpha_k}}
\sum_{n=n_k}^\infty\frac1{(n+z)^{s_1}\cdots(n+k-1+z)^{s_k}}.
\end{eqnarray*}
This is less than $\eta_2(M_I)$:
\begin{eqnarray}\label{eq.22}
\eta_2(M_I) &=&
\sum_{1\leq n_1<\cdots<n_k}\frac1{(n_1+z)^{m\alpha_1}\cdots(n_k+z)^{m\alpha_k}}
\sum_{n=n_k}^\infty\frac1{(n+z)^{2}}\\
&=&\zeta(m\alpha_1,\ldots,m\alpha_k+2;z)+\zeta(m\alpha_1,\ldots,m\alpha_k,2;z).\nonumber
\end{eqnarray}
Since the last terms are convergent, $\eta_{s_1,\ldots,s_k}(M_I)$ converges by the comparison theorem.
\QSym can be generated by $M_I$, therefore we complete the proof.
\end{proof}
The following is easily extended from \cite[Proposition 6]{Hof1}.
\begin{lemma}\label{lma.22}
Let $s_1,\ldots,s_k$ be a nonnegative integer sequence with $s_i,s_j\geq 1$ for $1\leq i<j\leq k$.
If $s_1+\cdots+s_k\geq 3$, then
$$
\eta_{s_1,\ldots,s_i,\ldots,s_j,\ldots,s_k}=\frac1{j-i}
\left(\eta_{s_1,\ldots,s_i,\ldots,s_j-1,\ldots,s_k}-\eta_{s_1,\ldots,s_i-1,\ldots,s_j,\ldots,s_k}\right).
$$
\end{lemma}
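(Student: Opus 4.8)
The plan is to observe that the asserted identity holds termwise in $n$ and reduces to an elementary partial-fraction decomposition involving only the two denominator factors at positions $i$ and $j$. For each $n$ the numerator $u$, evaluated under the truncation $x_\ell = 1/(\ell+z)^m$ for $\ell \leq n$ and $x_\ell = 0$ otherwise, produces the \emph{same} value in all three series of the statement, since that truncation depends on $n$ alone and not on the exponents $s_\ell$. Hence $u$ is a common factor at each $n$, and it suffices to manipulate the denominators.

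Fix $n$ and write $a = n+i-1+z$ and $b = n+j-1+z$, so that $b - a = j - i$. Collecting the untouched factors into $D = \prod_{\ell \neq i,j}(n+\ell-1+z)^{s_\ell}$, the $n$-th summand of $\eta_{s_1,\ldots,s_k}(u)$ is $u/(a^{s_i} b^{s_j} D)$. First I would record the elementary identity
$$
\frac{b-a}{a^{s_i} b^{s_j}} = \frac{1}{a^{s_i} b^{s_j-1}} - \frac{1}{a^{s_i-1} b^{s_j}}.
$$
Dividing by $b - a = j - i$ and restoring the common factors $u$ and $D$ gives
$$
\frac{u}{a^{s_i} b^{s_j} D} = \frac{1}{j-i}\left(\frac{u}{a^{s_i} b^{s_j-1} D} - \frac{u}{a^{s_i-1} b^{s_j} D}\right),
$$
whose three terms are precisely the $n$-th summands of $\eta_{s_1,\ldots,s_i,\ldots,s_j,\ldots,s_k}(u)$, of $\eta_{s_1,\ldots,s_i,\ldots,s_j-1,\ldots,s_k}(u)$, and of $\eta_{s_1,\ldots,s_i-1,\ldots,s_j,\ldots,s_k}(u)$.

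Summing over $n \geq 1$ then yields the stated identity, provided each of the three series converges so that the sum may legitimately be separated. The hypotheses $s_i, s_j \geq 1$ guarantee that the lowered exponents $s_i-1$ and $s_j-1$ remain nonnegative, so the two right-hand objects are again $H$-functions of the admissible form; and $s_1 + \cdots + s_k \geq 3$ ensures that lowering a single exponent by one leaves the total weight at least $2$, so by Theorem 2.1 all three series converge. This justifies the splitting.

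The only point requiring care is this convergence bookkeeping: the computation itself is a single partial fraction, and the weight hypothesis $\geq 3$ is present exactly to keep every $\eta$ that appears at weight $\geq 2$. The extension from Hoffman's case $z = 0$ in \cite[Proposition 6]{Hof1} is transparent, since the key difference $b - a = j - i$ does not involve $z$.
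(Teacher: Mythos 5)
Your proof is correct and takes essentially the same approach as the paper: the paper's proof consists precisely of the termwise partial-fraction identity $\frac{1}{(n+i-1+z)(n+j-1+z)}=\frac{1}{j-i}\bigl(\frac{1}{n+i-1+z}-\frac{1}{n+j-1+z}\bigr)$, which is your identity after multiplying through by the remaining powers and the common factor $u$. Your extra bookkeeping---that $s_i,s_j\geq 1$ keeps the lowered exponents nonnegative and that total weight $\geq 3$ keeps each resulting $\eta$ at weight $\geq 2$, hence convergent, so the series may be split---is left implicit in the paper but matches its framework exactly.
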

\begin{proof}
Using the following key equation 
$$
\frac1{(n+i-1+z)(n+j-1+z)}=\frac1{j-i}\left(\frac1{n+i-1+z}-\frac1{n+j-1+z}\right),
$$
we get the desired result. 
\end{proof}
Therefore $\eta_{s_1,\ldots,s_k}$ can be written as a linear combination of the following two 
types: $\eta_{0^a,p}$ and $\eta_{0^a,1,0^b,1}$, where $a$, $b$, $p$ are nonnegetive integers
and $p\geq 2$. For example,
$$
\eta_{2,3,2}=\eta_{0,1,1}-\eta_{1,1}+\frac14\eta_2+\eta_{0,3}-\frac14\eta_{0,0,2}.
$$
In order to evaluate these two types of the general formulas for 
$\eta_{0^a,p}$ and $\eta_{0^a,1,0^b,1}$, we need the following partial fraction decomposition:
\begin{lemma}\label{lma.23}
For positive integers $k, m$, and $a$, we have
\begin{equation}\label{eq.23}
\frac1{x^k(x+a)^m}=\sum^k_{\ell=1}{k-\ell+m-1\choose m-1}\frac{(-1)^{k-\ell}}{a^{m+k-\ell}x^\ell}
+\sum^m_{\ell=1}{k-\ell+m-1\choose k-1}\frac{(-1)^k}{a^{m+k-\ell}(x+a)^\ell}.
\end{equation}
\end{lemma}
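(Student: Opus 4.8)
The plan is to read off the decomposition directly from the principal parts at the two poles. Since $1/(x^k(x+a)^m)$ is a proper rational function (numerator of degree $0$, denominator of degree $k+m\geq 2$), it vanishes at infinity and hence equals the sum of its principal parts at $x=0$ and at $x=-a$. So I would start from the ansatz
$$
\frac1{x^k(x+a)^m}=\sum_{\ell=1}^k\frac{A_\ell}{x^\ell}+\sum_{\ell=1}^m\frac{B_\ell}{(x+a)^\ell},
$$
and determine each coefficient by a local series expansion of the regular factor at the corresponding pole.

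For the coefficients $A_\ell$ at $x=0$, I would expand $1/(x+a)^m$ as a binomial series about $x=0$:
$$
\frac1{(x+a)^m}=\frac1{a^m}\Bigl(1+\frac xa\Bigr)^{-m}
=\sum_{j\geq0}(-1)^j{m+j-1\choose j}\frac{x^j}{a^{m+j}}.
$$
Multiplying by $1/x^k$ and keeping only the singular terms $x^{-\ell}$ with $1\leq\ell\leq k$ (that is, the index $j=k-\ell$) yields
$A_\ell=(-1)^{k-\ell}{m+k-\ell-1\choose k-\ell}a^{-(m+k-\ell)}$, and the symmetry ${m+k-\ell-1\choose k-\ell}={m+k-\ell-1\choose m-1}$ puts this into the form of the first sum. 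The coefficients $B_\ell$ at $x=-a$ come out the same way after the substitution $y=x+a$: writing $1/x^k=(-1)^k/(a-y)^k$ and expanding in powers of $y$ gives $B_\ell=(-1)^k{k+m-\ell-1\choose m-\ell}a^{-(m+k-\ell)}$ from the index $j=m-\ell$, and ${k+m-\ell-1\choose m-\ell}={k+m-\ell-1\choose k-1}$ matches the second sum.

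The entire content of the proof is therefore this pair of binomial expansions together with the identity ${n\choose j}={n\choose n-j}$; the only bookkeeping is tracking the signs $(-1)^{k-\ell}$ versus $(-1)^k$ and the power of $a$, which is uniformly $a^{-(m+k-\ell)}$ in both families. No step is a genuine obstacle. An alternative would be induction on $m$ starting from the base split $1/(x(x+a))=a^{-1}\bigl(1/x-1/(x+a)\bigr)$, but I would prefer the direct principal-part computation since it makes the appearance of the two binomial coefficients completely transparent.
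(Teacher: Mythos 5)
Your proof is correct, but it takes a genuinely different route from the paper's. The paper proceeds by induction on $m+k$: the base case is the elementary split $\frac{1}{x(x+a)}=\frac1{ax}-\frac1{a(x+a)}$, and the inductive step multiplies the identity for $\frac1{x^k(x+a)^m}$ by $\frac1x$, re-expands each resulting term $\frac1{x(x+a)^\ell}$ by a second application of the hypothesis, and then collapses the ensuing double sum using the hockey-stick identity $\sum_{\ell=j}^{m}\binom{k+m-\ell-1}{k-1}=\binom{k+m-j}{m-j}$ (cited from Graham--Knuth--Patashnik, Eq.~(5.9)); the symmetric case $m\mapsto m+1$ is left to the reader as similar. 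You instead identify the right-hand side directly as the sum of the two principal parts: since the function is proper it vanishes at infinity, so it equals its principal parts at $x=0$ and $x=-a$, and the coefficients are read off from the binomial expansions of $(1+x/a)^{-m}$ at $x=0$ and of $(-1)^k(1-y/a)^{-k}$ at $y=x+a=0$, with the symmetry $\binom{n}{j}=\binom{n}{n-j}$ converting $\binom{m+k-\ell-1}{k-\ell}$ and $\binom{k+m-\ell-1}{m-\ell}$ into the forms $\binom{k-\ell+m-1}{m-1}$ and $\binom{k-\ell+m-1}{k-1}$ appearing in the statement; I checked both families of coefficients and the signs and powers $a^{-(m+k-\ell)}$, and they match Eq.~(\ref{eq.23}) exactly. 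What each approach buys: yours is shorter, avoids both the induction and the auxiliary binomial summation, and makes the binomial coefficients transparent (they are literally Taylor coefficients of the regular factor at each pole), at the cost of invoking the standard fact that a proper rational function equals the sum of its principal parts --- which you do justify correctly by noting that the difference is a rational function without poles vanishing at infinity; the paper's induction stays within purely algebraic manipulation of finite identities, which is self-contained but heavier in bookkeeping and, as written, omits half of the inductive step.
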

\begin{proof}
We use the induction on $m+k$ to prove this lemma. For $m=k=1$, it is clearly that 
$$
\frac1{x(x+a)}=\frac1{ax}-\frac1{a(x+a)}.
$$
Assume that Eq.\,(\ref{eq.23}) is true for $2\leq m+k\leq n$. Thus 
\begin{eqnarray}\label{eq.24}
\lefteqn{\frac1{x^{k+1}(x+a)^m}=\frac1x\cdot\frac1{x^k(x+a)^m}} \\ \nonumber
&=&\sum^k_{\ell=1}{k-\ell+m-1\choose m-1}\frac{(-1)^{k-\ell}}{a^{m+k-\ell}x^{\ell+1}}
+\sum^m_{\ell=1}{k-\ell+m-1\choose k-1}\frac{(-1)^k}{a^{m+k-\ell}x(x+a)^\ell}.
\end{eqnarray}
The last equation is given by the inductive hypothese.
For $1\leq\ell\leq m$, we can apply the inductive hypothese again to get the following equation.
$$
\frac1{x(x+a)^\ell}=\frac1{a^\ell x}-\sum^\ell_{j=1}\frac1{a^{\ell+1-j}(x+a)^j}.
$$
Therefore
\begin{eqnarray*}
\lefteqn{\sum^m_{\ell=1}{k-\ell+m-1\choose k-1}\frac{(-1)^k}{a^{m+k-\ell}x(x+a)^\ell}}\\
&=&\frac{(-1)^k}{a^{m+k}x}\sum^m_{\ell=1}{k+m-\ell-1\choose k-1}
+(-1)^{k+1}\sum^m_{\ell=1}\sum^\ell_{j=1}\frac{{k+m-\ell-1\choose k-1}}{a^{m+k+1-j}(x+a)^j}\\
&=&\frac{(-1)^k}{a^{m+k}x}{k+m-1\choose m-1}
+(-1)^{k+1}\sum^m_{j=1}\frac{1}{a^{m+k+1-j}(x+a)^j}\sum^m_{\ell=j}{k+m-\ell-1\choose k-1}\\
&=& \frac{(-1)^k}{a^{m+k}x}{k+m-1\choose m-1}
+(-1)^{k+1}\sum^m_{j=1}\frac{{k+m-j\choose m-j}}{a^{m+k+1-j}(x+a)^j},
\end{eqnarray*}
where $\sum^m_{\ell=j}{k+m-\ell-1\choose k-1}={k+m-j\choose m-j}$ is obtained from
\cite[Eq.\,(5.9)]{GKP}.
Substituing this equation in Eq.\,(\ref{eq.24}) we have 
$$
\frac1{x^{k+1}(x+a)^m}
=\sum^{k+1}_{\ell=1}{k-\ell+m\choose m-1}\frac{(-1)^{k+1-\ell}}{a^{m+k+1-\ell}x^\ell}
+\sum^m_{\ell=1}{k-\ell+m\choose k}\frac{(-1)^{k+1}}{a^{m+k+1-\ell}(x+a)^\ell}.
$$
For the case $\frac1{x^k(x+a)^{m+1}}$, the proof is similarly. Therefore we omit them.
\end{proof}
\section{The general formula for $\eta_{0^a,1,0^b,1}$}
First we derive a general formula for $\eta_{0^a,1,0^b,1}$ with nonnegative integers $a$, $b$.
\begin{lemma}
For nonnegative integers $a$, $b$, we have 
\begin{equation}\label{eq.31}
\eta_{0^a,1,0^b,1} = \frac1{b+1}\sum^b_{r=0}\eta_{0^{r+a},1^2}.
\end{equation}
\end{lemma}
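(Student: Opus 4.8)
The plan is to reduce the identity to an elementary telescoping computation, since the quasi-symmetric factor is inert. First I would observe that for each fixed $n$ the element $u$ (evaluated at $x_i=1/(i+z)^m$ for $i\le n$ and $x_i=0$ for $i>n$) is just a scalar, so it may be pulled through any partial-fraction manipulation of the denominators; by linearity one could equally reduce to the monomial basis $u=M_I$. Unwinding the definition of the $H$-function with the convention that the $j$-th denominator factor is $(n+j-1+z)^{s_j}$, the two sides read
$$
\eta_{0^a,1,0^b,1}(u)=\sum_{n=1}^\infty\frac{u}{(n+a+z)(n+a+b+1+z)},\qquad
\eta_{0^{r+a},1^2}(u)=\sum_{n=1}^\infty\frac{u}{(n+r+a+z)(n+r+a+1+z)}.
$$
Thus the whole statement concerns only the shifted products appearing in these denominators.

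The key step is the two-term partial fraction $\frac{1}{x(x+1)}=\frac1x-\frac1{x+1}$ applied with $x=n+r+a+z$, which makes the sum over $r$ telescope for each fixed $n$:
$$
\sum_{r=0}^b\frac{1}{(n+r+a+z)(n+r+a+1+z)}
=\frac{1}{n+a+z}-\frac{1}{n+a+b+1+z}
=\frac{b+1}{(n+a+z)(n+a+b+1+z)}.
$$
Multiplying by $u$, summing over $n$, and dividing by $b+1$ then collapses $\frac{1}{b+1}\sum_{r=0}^b\eta_{0^{r+a},1^2}(u)$ exactly onto $\eta_{0^a,1,0^b,1}(u)$, which is the desired equality.

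The one bookkeeping point is the interchange of the finite sum over $r=0,\ldots,b$ with the infinite sum over $n$; this is immediate because each $\eta_{0^{r+a},1^2}(u)$ converges by Theorem 2.1 and there are only $b+1$ summands, so the series may be combined into a single sum over $n$ in which the telescoping is carried out termwise. I do not expect any genuine obstacle here: the content is entirely the correct tracking of the index shifts forced by the convention $s_j\leftrightarrow(n+j-1+z)$, together with the elementary telescope. As a sanity check, the boundary case $b=0$ is consistent, since then both sides reduce to $\eta_{0^a,1^2}(u)$.
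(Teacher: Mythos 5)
Your proof is correct, but it takes a genuinely different route from the paper. The paper fixes $a$ and inducts on $b$: it applies Lemma \ref{lma.22} twice to the auxiliary weight-three function $\eta_{0^a,1,0^b,1^2}$, once with $(i,j)=(a+1,a+b+3)$ and once with $(i,j)=(a+b+2,a+b+3)$, then eliminates that auxiliary function between the two resulting identities to obtain the recursion $(b+2)\,\eta_{0^a,1,0^{b+1},1}=\eta_{0^{a+b+1},1^2}+(b+1)\,\eta_{0^a,1,0^b,1}$, and closes with the inductive hypothesis. You instead telescope directly: your key observation --- that the numerator $u$, truncated at index $n$, does not move when the string of leading zeros is lengthened, so only the denominator shifts with $r$ --- is exactly what legitimizes summing $\frac{1}{(n+r+a+z)(n+r+a+1+z)}=\frac{1}{n+r+a+z}-\frac{1}{n+r+a+1+z}$ over $r=0,\ldots,b$ for each fixed $n$ and collapsing to $\frac{b+1}{(n+a+z)(n+a+b+1+z)}$. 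Both arguments ultimately rest on the same partial fraction (it is the ``key equation'' in the proof of Lemma \ref{lma.22}), but yours is a one-shot computation that avoids both the induction and the auxiliary $H$-function, and it makes explicit the (harmless, since the $r$-sum is finite and each series converges by the paper's first theorem in Section 2) interchange of summations that the paper leaves implicit. What the paper's route buys in exchange is that it stays entirely inside the calculus of $H$-functions and Lemma \ref{lma.22}, the machinery it reuses throughout; your route is shorter and more self-contained. Your sanity check at $b=0$ correctly matches what would be the induction basis.
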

\begin{proof}
Fix the nonnegative integer $a$, the proof is used the mathematical induction on the number $b$.
We use Lemma \ref{lma.22} to the $H$-function 
$\eta_{0^a,1,0^b,1^2}$ with $i=a+1$, $j=a+b+3$, and we get the following equation:
$$
\eta_{0^a,1,0^b,1^2}=\frac1{b+2}\left(\eta_{0^a,1,0^b,1}-\eta_{0^{a+b+1},1^2}\right).
$$
On the other hand, we apply Lemma \ref{lma.22} again to the same $H$-function, but 
with $i=a+b+2$ and $j=a+b+3$:
$$
\eta_{0^a,1,0^b,1^2}=\eta_{0^a,1,0^b,1}-\eta_{0^a,1,0^{b+1},1}.
$$
Combining these two equations we have
$$
(b+2)\eta_{0^a,1,0^{b+1},1}=\eta_{0^{a+b+1},1^2}+(b+1)\eta_{0^a,1,0^b,1}.
$$
Now we apply the inductive hypothese and complete the proof.
\end{proof}

Therefore we need only to evaluate $\eta_{0^b,1^2}$.
\begin{theorem}\label{thm.32}
For any composition $I=(\alpha_1,\ldots,\alpha_k)$, we have
$$
\eta_{1,1}(M_I) = \zeta(m\alpha_1,\ldots,m\alpha_k+1;z).
$$
\end{theorem}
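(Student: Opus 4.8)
The plan is to unfold the definition of $\eta_{1,1}(M_I)$, interchange the order of summation exactly as in the proof of Theorem~2.2, and then collapse the remaining inner sum by telescoping. First I would substitute the evaluation
$$
M_I=\zeta_n(m\alpha_1,\ldots,m\alpha_k;z)=\sum_{1\leq n_1<\cdots<n_k\leq n}\frac{1}{(n_1+z)^{m\alpha_1}\cdots(n_k+z)^{m\alpha_k}}
$$
into $\eta_{1,1}(M_I)=\sum_{n=1}^\infty M_I/\bigl((n+z)(n+1+z)\bigr)$, and regroup the resulting double sum by the index tuple $(n_1,\ldots,n_k)$. Since every summand is positive for $z\in(-1,0]$ and the rearranged series is dominated term-by-term by $\eta_2(M_I)$, which is finite by Eq.~(\ref{eq.22}), this interchange is legitimate and yields
$$
\eta_{1,1}(M_I)=\sum_{1\leq n_1<\cdots<n_k}\frac{1}{(n_1+z)^{m\alpha_1}\cdots(n_k+z)^{m\alpha_k}}\sum_{n=n_k}^\infty\frac{1}{(n+z)(n+1+z)}.
$$

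The essential step is the evaluation of the inner sum. Writing $\frac{1}{(n+z)(n+1+z)}=\frac{1}{n+z}-\frac{1}{n+1+z}$ (the case $i=1$, $j=2$ of the key equation used in Lemma~\ref{lma.22}), the sum over $n\geq n_k$ telescopes to $\frac{1}{n_k+z}$. Substituting this back absorbs the factor $1/(n_k+z)$ into the last variable, raising its exponent from $m\alpha_k$ to $m\alpha_k+1$:
$$
\eta_{1,1}(M_I)=\sum_{1\leq n_1<\cdots<n_k}\frac{1}{(n_1+z)^{m\alpha_1}\cdots(n_{k-1}+z)^{m\alpha_{k-1}}(n_k+z)^{m\alpha_k+1}},
$$
which is exactly $\zeta(m\alpha_1,\ldots,m\alpha_{k-1},m\alpha_k+1;z)$ by the definition of the multiple Hurwitz zeta function. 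Since $\QSym$ is spanned by the $M_I$, this determines $\eta_{1,1}$ completely.

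I do not expect a genuine obstacle here; the entire argument reduces to one clean telescoping. The only point deserving care is the justification of the interchange of the two summations, and that is handed to us directly by the positivity of the terms for $z\in(-1,0]$ together with the finiteness of $\eta_2(M_I)$ recorded in Eq.~(\ref{eq.22}).
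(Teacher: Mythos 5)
Your proposal is correct and follows essentially the same route as the paper: interchange the order of summation, telescope $\sum_{n\geq n_k}\bigl(\tfrac{1}{n+z}-\tfrac{1}{n+1+z}\bigr)=\tfrac{1}{n_k+z}$, and absorb that factor into the last exponent. The only difference is cosmetic --- you explicitly justify the interchange via positivity and domination by $\eta_2(M_I)$, which the paper leaves implicit, having already used the same rearrangement in the proof of its convergence theorem.
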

\begin{proof}
Since
\begin{equation}\label{eq.32}
\sum_{n=n_k}^\infty \frac1{(n+z)(n+1+z)}=\sum_{n=n_k}^\infty
\left(\frac1{n+z}-\frac1{n+1+z}\right)=\frac1{n_k+z},
\end{equation}
we have
\begin{eqnarray*}
\eta_{1,1}(M_I) &=& \sum_{1\leq n_1<\cdots<n_k}\frac1{(n_1+z)^{m\alpha_1}\cdots(n_k+z)^{m\alpha_k}}
\sum^\infty_{n=n_k}\frac1{(n+z)(n+1+z)} \\
&=&\sum_{1\leq n_1<\cdots<n_k}\frac1{(n_1+z)^{m\alpha_1}\cdots(n_k+z)^{m\alpha_k+1}}\\
&=& \zeta(m\alpha_1,\ldots,m\alpha_{k-1},m\alpha_k+1;z).
\end{eqnarray*}
\end{proof}
\begin{theorem}\label{thm.33}
Given a positive integer $b$ and a composition $I=(\alpha_1,\ldots,\alpha_k)$, we have
\begin{enumerate}
\item if $I=(1)$, then $\eta_{0^b,1^2}(M_I)=\displaystyle\frac{(-1)^{m-1}}{b^m}H_b(z)
+\sum^{m-2}_{r=0}\frac{(-1)^r}{b^{r+1}}\zeta(m-r;z)$;
\item if $I\neq(1)$ and $\alpha_k=1$, then
$\eta_{0^b,1^2}(M_I)=\displaystyle\frac{(-1)^{m-1}}{b^m}\sum^b_{r=1}\eta_{0^r,1^2}(M_{(\alpha_1,\ldots,\alpha_{k-1})})
+\displaystyle\sum^{m-2}_{r=0}\frac{(-1)^r}{b^{r+1}}\zeta(m\alpha_1,\ldots,m\alpha_{k-1},m-r;z)$,
\item otherwise (that is, $\alpha_k\geq 2$), then \\
$\eta_{0^b,1^2}(M_I)=
\displaystyle\frac{(-1)^{m}}{b^m}\eta_{0^b,1^2}(M_{(\alpha_1,\ldots,\alpha_k-1)})
+\displaystyle\sum^{m-1}_{r=0}\frac{(-1)^r}{b^{r+1}}\zeta(m\alpha_1,\ldots,m\alpha_k-r;z)$.
\end{enumerate}
\end{theorem}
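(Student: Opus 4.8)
The plan is to unfold the definition of $\eta_{0^b,1^2}(M_I)$, interchange the order of summation (justified by the absolute convergence established in the convergence theorem of Section 2, all summands being positive for $z\in(-1,0]$), and telescope the tail. Writing $M_I=\zeta_n(m\alpha_1,\ldots,m\alpha_k;z)$ and using the same telescoping as in Eq.\,(\ref{eq.32}), shifted by $b$, namely $\sum_{n=n_k}^\infty\frac{1}{(n+b+z)(n+b+1+z)}=\frac{1}{n_k+b+z}$, one reaches the compact form
\begin{equation*}
\eta_{0^b,1^2}(M_I)=\sum_{1\le n_1<\cdots<n_k}\frac{1}{(n_1+z)^{m\alpha_1}\cdots(n_k+z)^{m\alpha_k}(n_k+b+z)}.
\end{equation*}
Everything then hinges on resolving the factor $\frac{1}{(n_k+z)^{m\alpha_k}(n_k+b+z)}$ attached to the largest index, and the three cases of the statement will be exactly the three ways this factor behaves.

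Next I would apply the partial fraction decomposition of Lemma \ref{lma.23} with its exponent parameter set to $1$, with $a=b$ and $x=n_k+z$. It is cleanest to peel off a single block $(n_k+z)^{-m}(n_k+b+z)^{-1}$ while keeping the surviving power $(n_k+z)^{-m(\alpha_k-1)}$ intact; the lemma gives
\begin{equation*}
\frac{1}{(n_k+z)^m(n_k+b+z)}=\sum_{r=0}^{m-1}\frac{(-1)^r}{b^{r+1}(n_k+z)^{m-r}}+\frac{(-1)^m}{b^m(n_k+b+z)}.
\end{equation*}
When $\alpha_k\ge 2$ (case (iii)) the leftover exponent $m(\alpha_k-1)\ge 1$ keeps every resulting series convergent on its own, so I may distribute term by term: the first sum reassembles into $\sum_{r=0}^{m-1}\frac{(-1)^r}{b^{r+1}}\zeta(m\alpha_1,\ldots,m\alpha_k-r;z)$, and the last term is precisely $\frac{(-1)^m}{b^m}\eta_{0^b,1^2}(M_{(\alpha_1,\ldots,\alpha_k-1)})$, which is the asserted identity.

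The delicate point, and the main obstacle, is the boundary behavior when $\alpha_k=1$, that is, cases (i) and (ii). Here no power of $(n_k+z)$ survives the peeling, and the final two pieces $\frac{(-1)^{m-1}}{b^m(n_k+z)}$ and $\frac{(-1)^m}{b^m(n_k+b+z)}$ each diverge when summed over the largest index down to its lower limit; only their difference $\frac{(-1)^{m-1}}{b^m}\bigl(\frac{1}{n_k+z}-\frac{1}{n_k+b+z}\bigr)$ is summable, so these two terms must be kept together rather than split off as a separate zeta value. The key computation is then the telescoping identity
\begin{equation*}
\sum_{n_k=N+1}^\infty\left(\frac{1}{n_k+z}-\frac{1}{n_k+b+z}\right)=\sum_{j=1}^b\frac{1}{N+j+z},
\end{equation*}
which is exactly what truncates the zeta sum at $r=m-2$ and produces the correction term. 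With $N=n_{k-1}$ the difference becomes $\frac{(-1)^{m-1}}{b^m}\sum_{j=1}^b\eta_{0^j,1^2}(M_{(\alpha_1,\ldots,\alpha_{k-1})})$, giving case (ii), whereas in case (i) with $k=1$ and $N=0$ it collapses directly to $\frac{(-1)^{m-1}}{b^m}H_b(z)$. In each case the surviving terms $r=0,\ldots,m-2$ yield the stated Hurwitz zeta values, and the three identities follow; note that for $m=1$ the $r$-sums are empty, consistent with the formulas. I expect the bookkeeping of this boundary term, and recognizing that it cannot be decomposed term by term, to be the only genuinely subtle step.
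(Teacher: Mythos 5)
Your proposal is correct and takes essentially the same route as the paper's own proof: after telescoping the tail to the compact form with the single factor $(n_k+b+z)^{-1}$, the paper likewise applies the specialization of Lemma \ref{lma.23} given in Eq.\,(\ref{eq.33}) and, in the $\alpha_k=1$ cases, keeps the two individually divergent pieces paired as $\frac{(-1)^{m-1}}{b^m}\bigl(\frac1{n_k+z}-\frac1{n_k+b+z}\bigr)$, whose finite telescoped value produces $H_b(z)$ in case (1) and the recursion $\sum_{r=1}^b\eta_{0^r,1^2}(M_{(\alpha_1,\ldots,\alpha_{k-1})})$ in case (2), exactly as you argue. Your treatment of case (3) by peeling off one block $(n_k+z)^{-m}(n_k+b+z)^{-1}$ and your convergence bookkeeping also match the paper, so there is nothing to correct.
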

\begin{proof}
If $I=(1)$, then by Eq.\,(\ref{eq.32})
$$
\eta_{0^b,1^2}(M_{(1)})=\sum_{n=1}^\infty\frac1{(n+z)^m(n+b+z)}.
$$
For a pair of positive integers $j,k$, using Lemma \ref{lma.23} we have the partial fraction decomposition
\begin{equation}\label{eq.33}
\frac1{(n+z)^{j}(n+k+z)}=\frac{(-1)^j}{k^{j}(n+k+z)}+\sum^{j-1}_{r=0}\frac{(-1)^r}{k^{r+1}(n+z)^{j-r}}.
\end{equation}
Thus 
\begin{eqnarray*}
\eta_{0^b,1^2}(M_{(1)}) &=& 
\frac{(-1)^{m-1}}{b^m}\sum^\infty_{n=1}\left(\frac1{n+z}-\frac1{n+b+z}\right)
+\sum^{m-2}_{r=0}\frac{(-1)^r}{b^{r+1}}\sum^\infty_{n=1}\frac1{(n+z)^{m-r}}\\
&=&\frac{(-1)^{m-1}}{b^m}H_b(z)
+\sum^{m-2}_{r=0}\frac{(-1)^r}{b^{r+1}}\zeta(m-r;z).
\end{eqnarray*}
If $I\neq(1)$ and $\alpha_k=1$, then 
\begin{eqnarray*}
\eta_{0^b,1^2}(M_I) &=& \sum_{1\leq n_1<\cdots<n_k}\frac1{(n_1+z)^{m\alpha_1}\cdots
(n_{k-1}+z)^{m\alpha_{k-1}}(n_k+z)^m(n_k+b+z)} \\
&=&\sum_{1\leq n_1<\cdots<n_{k-1}}\frac1{(n_1+z)^{m\alpha-1}\cdots(n_{k-1}+z)^{m\alpha_{k-1}}}\\
&&\qquad\qquad\qquad\times
\sum^\infty_{n_k=n_{k-1}+1}\frac1{(n_k+z)^m(n_k+b+z)}.
\end{eqnarray*}
We apply Eq.\,(\ref{eq.33}) to the last term of the above equation, then
\begin{eqnarray*}
\eta_{0^b,1^2}(M_I) &=& 
\frac{(-1)^{m-1}}{b^m}\sum^b_{r=1}\sum_{1\leq n_1<\cdots<n_{k-1}}
\frac1{(n_1+z)^{m\alpha_1}\cdots(n_{k-1}+z)^{m\alpha_{k-1}}(n_{k-1}+r+z)}\\
&&+\sum^{m-2}_{r=0}\frac{(-1)^r}{b^{r+1}}
\sum_{1\leq n_1<\cdots<n_k}\frac1{(n_1+z)^{m\alpha_1}\cdots(n_{k-1}+z)^{m\alpha_{k-1}}(n_k+z)^{m-r}}\\
&=&\frac{(-1)^{m-1}}{b^m}\sum^b_{r=1}\eta_{0^r,1^2}(M_{(\alpha_1,\ldots,\alpha_{k-1})})
+\sum^{m-2}_{r=0}\frac{(-1)^r}{b^{r+1}}\zeta(m\alpha_1,\ldots,m\alpha_{k-1},m-r;z).
\end{eqnarray*}
The final case is $\alpha_k\geq 2$. We have
$$
\eta_{0^b,1^2}(M_I)=
\sum_{1\leq n_1<\cdots<n_k}\frac1{(n_1+z)^{m\alpha_1}\cdots(n_k+z)^{m(\alpha_k-1)}
(n_k+z)^m(n_k+b+z)}.
$$
We substitute Eq.\,(\ref{eq.33}) into the above partial fraction, we have
\begin{eqnarray*}
\eta_{0^b,1^2}(M_I) &=&
\frac{(-1)^m}{b^m}\sum_{1\leq n_1<\cdots<n_k}\frac1{(n_1+z)^{m\alpha_1}\cdots(n_k+z)^{m(\alpha_k-1)}
(n_k+b+z)}\\
&&+\sum^{m-1}_{r=0}\frac{(-1)^r}{b^{r+1}}\sum_{1\leq n_1<\cdots<n_k}
\frac1{(n_1+z)^{m\alpha_1}\cdots(n_k+z)^{m\alpha_k-r}}\\
&=&\frac{(-1)^m}{b^m}\eta_{0^b,1^2}(M_{(\alpha_1,\ldots,\alpha_k-1)})
+\sum^{m-1}_{r=0}\frac{(-1)^r}{b^{r+1}}\zeta(m\alpha_1,\ldots,m\alpha_k-r;z).
\end{eqnarray*}
We thus complete the proof.
\end{proof}
Combing Theorem \ref{thm.32} and Theorem \ref{thm.33} 
we know that $\eta_{0^a,1,0^b,1}(M_I)$ can be written as 
a linear combination of multiple Hurwitz zeta functions
and some speical values of $H_n^{(m)}(z)$.
\begin{theorem}\label{thm.34}
For nonnegative integers $a,b$ and any composition $I=(\alpha_1,\ldots,\alpha_k)$,
$$
\sum_{n=1}^\infty\frac{M_I}{(n+a+z)(n+a+b+1+z)}=\eta_{0^a,1,0^b,1}(M_I)
=\frac1{b+1}\sum^b_{r=0}\eta_{0^{r+a},1^2}(M_I)
$$
can be expressed as a linear combination of multiple Hurwitz zeta functions 
and some speical values of $H_n^{(m)}(z)$.
\end{theorem}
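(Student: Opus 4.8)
The plan is to read off the two displayed equalities from the definitions already in place and then to reduce the expressibility assertion to the single-block functions $\eta_{0^{b'},1^2}(M_I)$, for which Theorems \ref{thm.32} and \ref{thm.33} supply everything. The first equality merely unfolds the definition of the $H$-function: in $\eta_{0^a,1,0^b,1}$ the $a+b$ zero exponents contribute factors equal to $1$, leaving the denominator $(n+a+z)(n+a+b+1+z)$. The second equality is exactly Eq.\,(\ref{eq.31}) applied to the argument $M_I$. Hence the entire content of the theorem lies in showing that each summand $\eta_{0^{r+a},1^2}(M_I)$, for $r=0,1,\ldots,b$, is a linear combination of multiple Hurwitz zeta functions and special values of $H_n^{(m)}(z)$.

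To this end I would fix the index $b'=r+a$ and distinguish two cases. When $b'=0$ --- which occurs only for $a=0$ and $r=0$ --- the term is $\eta_{1,1}(M_I)$, and Theorem \ref{thm.32} evaluates it as the single multiple Hurwitz zeta value $\zeta(m\alpha_1,\ldots,m\alpha_k+1;z)$. When $b'\geq 1$ I would run an induction on the weight $|I|=\alpha_1+\cdots+\alpha_k$, using the three cases of Theorem \ref{thm.33} as the inductive machinery. The base case $|I|=1$ forces $I=(1)$, and case (1) of Theorem \ref{thm.33} expresses $\eta_{0^{b'},1^2}(M_{(1)})$ explicitly through $H_{b'}(z)$ and the Hurwitz values $\zeta(m-r;z)$, which is of the desired form. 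In the inductive step, case (3) ($\alpha_k\geq 2$) rewrites $\eta_{0^{b'},1^2}(M_I)$ in terms of $\eta_{0^{b'},1^2}(M_{(\alpha_1,\ldots,\alpha_k-1)})$ plus multiple Hurwitz zeta values, while case (2) ($\alpha_k=1$, $I\neq(1)$) rewrites it in terms of the values $\eta_{0^r,1^2}(M_{(\alpha_1,\ldots,\alpha_{k-1})})$, $1\leq r\leq b'$, plus multiple Hurwitz zeta values. In either case the composition on the right has weight $|I|-1$; since case (2) in fact invokes the smaller-composition values for the whole range $1\leq r\leq b'$, I would phrase the inductive hypothesis uniformly over all admissible $0$-counts, so that it applies at once to every term produced, and the required linear combination follows.

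The step needing the most care is the choice and verification of the induction measure: I must confirm that the weight $|I|$ strictly decreases under both recursive cases of Theorem \ref{thm.33} and that the only non-recursive term is the terminal case $I=(1)$. Since in case (2) the last part $\alpha_k=1$ is deleted and in case (3) the last part is lowered by one, the weight drops by exactly $1$ each time, so the recursion is well founded and terminates at $I=(1)$; there is no deeper obstacle, the argument being an assembly of the already-established Theorems \ref{thm.32} and \ref{thm.33}. Convergence of every series involved is guaranteed in advance by the convergence theorem of Section 2, so each manipulation is legitimate and the final expression is a finite linear combination of multiple Hurwitz zeta functions and special values of $H_n^{(m)}(z)$, as claimed.
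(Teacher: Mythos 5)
Your proposal is correct and follows essentially the same route as the paper, which states Theorem \ref{thm.34} without a separate proof as the immediate combination of Eq.\,(\ref{eq.31}) with Theorems \ref{thm.32} and \ref{thm.33}. Your only addition is to make explicit the induction on the weight $|I|$ (with the hypothesis uniform over all zero-counts $b'\geq 1$) that guarantees the recursion of Theorem \ref{thm.33} terminates at $I=(1)$ --- a well-foundedness check the paper leaves implicit.
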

It is worth to note that Theorem \ref{thm.33} (1) is the following identitiy:
\begin{equation}\label{eq.34}
\sum^\infty_{n=1}\frac{H_n^{(m)}(z)}{(n+b+z)(n+b+1+z)}
=\frac{(-1)^{m-1}}{b^m}H_b(z)
+\sum^{m-2}_{r=0}\frac{(-1)^r}{b^{r+1}}\zeta(m-r;z).
\end{equation}
When $b=1$ and $z=0$ this identity is appeared in \cite[Corollary 1]{Hof1}.
Here we list the identity with $z=-1/2$:
\begin{equation}\label{eq.35}
\sum^\infty_{n=1}\frac{O_n^{(m)}}{(2n+2b-1)(2n+2b+1)}
=\frac{(-1)^{m-1}O_b}{2^{m+1}b^m}+\sum^{m-2}_{\ell=0}
\frac{(-1)^\ell t(m-\ell)}{2^{\ell+2}b^{\ell+1}}.
\end{equation}
The following are the identities with $b=1$ and $m=1,2,3,4$:
\begin{eqnarray*}
\sum^\infty_{n=1}\frac{O_n}{(2n+1)(2n+3)} &=& \frac14,\\
\sum^\infty_{n=1}\frac{O_n^{(2)}}{(2n+1)(2n+3)} &=& \frac{t(2)}4-\frac18,\\
\sum^\infty_{n=1}\frac{O_n^{(3)}}{(2n+1)(2n+3)} &=& \frac{t(3)}4-\frac{t(2)}4+\frac1{16},\\
\sum^\infty_{n=1}\frac{O_n^{(4)}}{(2n+1)(2n+3)} &=& \frac{t(4)}4-\frac{t(3)}8
+\frac{t(2)}{16}-\frac1{32}.
\end{eqnarray*}
\begin{proposition}\label{pro.35}
For positive integers $k$, $m$, $b$, and a real number $z\in(-1,0]$, we have
\begin{eqnarray*}
\lefteqn{\sum^\infty_{n=1}\frac{P_k(H_n^{(m)}(z),-H_n^{(2m)}(z),
\ldots,(-1)^{k+1}H_n^{(km)}(z))}{(n+b+z)(n+b+1+z)}}\\
&=&\sum_{1\leq r_{k-1}\leq r_{k-2}\leq \cdots\leq r_0=b}
\frac{(-1)^{k(m-1)}}{\prod^{k-1}_{p=0}r_p^m}H_{r_{k-1}}(z)\\
&&+\sum^{k-1}_{j=0}\sum_{1\leq r_j\leq\cdots\leq r_0=b}\frac{(-1)^{j(m-1)}}{\prod^{j-1}_{p=0}r_p^m}
\sum^{m-2}_{\ell=0}\frac{(-1)^{\ell}}{r_j^{\ell+1}}\zeta(\{m\}^{k-j-1},m-\ell;z).
\end{eqnarray*}
\end{proposition}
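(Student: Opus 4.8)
The plan is to recognize the modified-Bell-polynomial combination as a single monomial quasi-symmetric function and then unwind the recursion of Theorem~\ref{thm.33} by induction on $k$. First I would observe that, under the substitution $x_i=1/(i+z)^m$ which identifies the power sum $p_k$ with $H_n^{(km)}(z)$, the identity $e_k=P_k(p_1,-p_2,\ldots,(-1)^{k+1}p_k)$ from \cite[Lemma 1]{CCE} yields
$$
P_k\bigl(H_n^{(m)}(z),-H_n^{(2m)}(z),\ldots,(-1)^{k+1}H_n^{(km)}(z)\bigr)=e_k.
$$
Since $e_k=N_{k,k}$ and the unique partition of $k$ of length $k$ is $(1^k)=(\underbrace{1,\ldots,1}_{k})$, we have $e_k=M_{(1^k)}=\zeta_n(\{m\}^k;z)$. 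Consequently the left-hand side is exactly $\eta_{0^b,1^2}(M_{(1^k)})$, and it suffices to evaluate this $H$-function.

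I would then prove the asserted identity for $\eta_{0^b,1^2}(M_{(1^k)})$ by induction on $k$, keeping $b$ a free positive-integer parameter so that the inductive hypothesis may be invoked for every value of the top index. The base case $k=1$ is precisely Theorem~\ref{thm.33}(1), equivalently Eq.\,(\ref{eq.34}): with the outer index read as $r_0=b$, the empty product equal to $1$, and the sign $(-1)^{0\cdot(m-1)}=1$, the single $j=0$ term reproduces $\sum_{\ell=0}^{m-2}(-1)^\ell b^{-\ell-1}\zeta(m-\ell;z)$, while the terminal term reproduces $(-1)^{m-1}b^{-m}H_b(z)$. For the inductive step, since $(1^k)\neq(1)$ and its last part equals $1$, Theorem~\ref{thm.33}(2) gives
$$
\eta_{0^b,1^2}(M_{(1^k)})=\frac{(-1)^{m-1}}{b^m}\sum_{r=1}^{b}\eta_{0^r,1^2}(M_{(1^{k-1})})+\sum_{\ell=0}^{m-2}\frac{(-1)^\ell}{b^{\ell+1}}\zeta(\{m\}^{k-1},m-\ell;z),
$$
and into the first summand I substitute the inductive hypothesis for $\eta_{0^r,1^2}(M_{(1^{k-1})})$ with $r$ in place of $b$.

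The heart of the argument, and the step I expect to be the most delicate, is the reindexing that follows. Setting $r_0=b$ and renaming the summation variable $r$ as $r_1$ (so that $1\le r_1\le r_0$), each chain $1\le r_{j'}\le\cdots\le r_0=r$ occurring in the $(k-1)$-level formula becomes a chain $1\le r_{j'+1}\le\cdots\le r_1=r$ with the new top $r_0=b$ prepended. I would track three quantities simultaneously under the shift $j'\mapsto j=j'+1$: the accumulated sign, where the factor $(-1)^{m-1}$ from the recursion promotes $(-1)^{j'(m-1)}$ to $(-1)^{j(m-1)}$ and the terminal sign $(-1)^{(k-1)(m-1)}$ to $(-1)^{k(m-1)}$; the reciprocal-power product, where $b^{-m}=r_0^{-m}$ extends $\prod_{p=0}^{j'-1}r_p^{-m}$ to $\prod_{p=0}^{j-1}r_p^{-m}$; and the length of the repeated-$m$ string inside the Hurwitz argument, where $(k-1)-j'-1=k-j-1$ is preserved. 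The new $j=0$ zeta-term, which is not produced by the recursion on the shorter composition, is furnished by the second summand displayed above.

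Assembling the shifted terminal ($H_{r_{k-1}}(z)$) term together with the zeta-terms for $j=0,1,\ldots,k-1$ then reproduces exactly Parts~A and~B of the claimed formula, and all resulting series converge by Theorem~\ref{thm.32} together with the first theorem of Section~2. The only genuine obstacle is bookkeeping: one must check that the single substitution and relabeling simultaneously rebalances the signs, the reciprocal powers $r_p^{-m}$, the nested inequality constraints, and the decremented depth of $\{m\}$ in each $\zeta(\{m\}^{k-j-1},m-\ell;z)$; once this is verified the induction closes and the proposition follows.
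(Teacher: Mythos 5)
Your proposal is correct and takes essentially the same route as the paper: both identify the left-hand side as $\eta_{0^b,1,1}(e_k)=\eta_{0^b,1^2}(M_{(1^k)})$, use Theorem \ref{thm.33}(2) to generate the recursion that lowers $k$ while summing over a new chain variable $r\le b$, and terminate with Theorem \ref{thm.33}(1) at $k=1$. The only cosmetic difference is that you phrase the argument as a formal induction on $k$ with $b$ kept free, whereas the paper unwinds the same recurrence directly; your sign/power/depth reindexing check is exactly the bookkeeping the paper leaves implicit, and it is carried out correctly.
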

\begin{proof}
The left-hand side of the equation is $\eta_{0^b,1,1}(e_k)$.
If $k\geq 2$, then we use Theorem \ref{thm.33} (2) and we have the following recursive relation
$$
\eta_{0^b,1,1}(e_k)=\sum^{m-2}_{r=0}\frac{(-1)^r}{b^{r+1}}\zeta(\{m\}^{k-1},m-r;z)
+\frac{(-1)^{m-1}}{b^m}\sum^b_{r=1}\eta_{0^r,1,1}(e_{k-1}).
$$
Solve this recursive relation we will reach the following equation
\begin{eqnarray*}
\lefteqn{\eta_{0^b,1,1}(e_k)=\sum_{1\leq r_{k-1}\leq r_{k-2}\leq \cdots\leq r_0=b}
\frac{(-1)^{(k-1)(m-1)}}{\prod^{k-2}_{p=0}r_p^m}\eta_{0^{r_{k-1}},1,1}(e_1)}\\
&&+\sum^{k-2}_{j=0}\sum_{1\leq r_j\leq \cdots\leq r_0=b}
\frac{(-1)^{j(m-1)}}{\prod^{j-1}_{p=0}r_p^m}\sum^{m-2}_{\ell=0}\frac{(-1)^\ell}{r_j^{\ell+1}}
\zeta(\{m\}^{k-j-1},m-\ell;z).
\end{eqnarray*}
We apply Theorem \ref{thm.33} (1), then the desired formula will be obtained.
Note that this formula is also true for $k=1$.
\end{proof}
We set $z=0$ and $m=1$ in the above proposition, the following identity is obtained.
\begin{example}
For a pair of positive integers $b$ and $k$, we have
\begin{equation}\label{eq.36}
\sum^\infty_{n=1}\frac{P_k(H_n,-H_n^{(2)},\ldots,(-1)^{k+1}H_n^{(k)})}{(n+b)(n+b+1)}
=\frac{\zeta^\star_b(\{1\}^k)}b=\frac1b\sum^b_{\ell=1}{b\choose\ell}\frac{(-1)^{\ell+1}}{\ell^k},
\end{equation}
where the last right-hand side identity can be found in \cite{Bat}.
\end{example}
We list the identities for $b=1,2,3$:
\begin{eqnarray*}
\sum^\infty_{n=1}\frac{P_k(H_n,-H_n^{(2)},\ldots,(-1)^{k+1}H_n^{(k)})}{(n+1)(n+2)} &=& 1,
\quad\mbox{\cite[Corollary 3]{Cho}, \cite[Eq.(8)]{Hof1}}\\
\sum^\infty_{n=1}\frac{P_k(H_n,-H_n^{(2)},\ldots,(-1)^{k+1}H_n^{(k)})}{(n+2)(n+3)} &=& 1-\frac1{2^{k+1}},\\
\sum^\infty_{n=1}\frac{P_k(H_n,-H_n^{(2)},\ldots,(-1)^{k+1}H_n^{(k)})}{(n+3)(n+4)} &=&
1-\frac1{2^k}+\frac1{3^{k+1}}.
\end{eqnarray*}
We set $z=0$, $m=2$, and $b=1$ in Proposition \ref{pro.35}, the following identity is obtained.
\begin{example}
For any positive integer $k$, we have
\begin{equation}\label{eq.37}
\sum^\infty_{n=1}\frac{P_k(H_n^{(2)},-H_n^{(4)},\ldots,(-1)^{k+1}H_n^{(2k)})}{(n+1)(n+2)} =
(-1)^k+\sum^k_{j=1}(-1)^{k-j}\zeta(\{2\}^j).
\end{equation}
\end{example}
We list the identities for k=1,2,3:
\begin{eqnarray*}
\sum^\infty_{n=1}\frac{H_n^{(2)}}{(n+1)(n+2)} &=& -1+\zeta(2),\\
\sum^\infty_{n=1}\frac{(H_n^{(2)})^2-H_n^{(4)}}{2(n+1)(n+2)}&=&1-\zeta(2)+\zeta(2,2),\\
\sum^\infty_{n=1}\frac{(H_n^{(2)})^3-3H_n^{(2)}H_n^{(4)}+2H_n^{(6)}}{6(n+1)(n+2)} &=&
-1+\zeta(2)-\zeta(2,2)+\zeta(2,2,2).
\end{eqnarray*}
We set $z=-1/2$, $m=1$, and $b=1$ in Proposition \ref{pro.35}, we have
\begin{example}
For any positive integer $k$, the following identity holds:
\begin{equation}\label{eq.38}
\sum^\infty_{n=1}\frac{P_k(O_n,-O_n^{(2)},\ldots,(-1)^{k+1}O_n^{(k)})}{(2n+1)(2n+3)}
=\frac1{2^{k+1}}.
\end{equation}
\end{example}
The following we list the identities which we set $z=-1/2$, $m=1$ in Proposition \ref{pro.35},
for $k=1,2$.
\begin{eqnarray}\label{eq.39}
\sum^\infty_{n=1}\frac{O_n}{(2n+2b-1)(2n+2b+1)} &=& \frac{O_b}{4b},\\
\sum^\infty_{n=1}\frac{O_n^2-O_n^{(2)}}{(2n+2b-1)(2n+2b+1)}&=&\frac1{4b}\sum^b_{r=1}\frac{O_r}r. \label{eq.310}
\end{eqnarray}
We set $z=-1/2$, $m=2$, and $b=1$ in Proposition \ref{pro.35}, the following identity is obtained.
\begin{example}
For any positive integer $k$, we have
$$
2^{2k+1}\sum^\infty_{n=1}\frac{P_k(O_n^{(2)},-O_n^{(4)},\ldots,(-1)^{k+1}O_n^{(2k)})}{(2n+1)(2n+3)}
=(-1)^k+\sum^k_{j=1}(-1)^{k-j}2^{2j-1}t(\{2\}^j).
$$
\end{example}
We list the identities for $k=1,2,3$:
\begin{eqnarray*}
8\sum^\infty_{n=1}\frac{O_n^{(2)}}{(2n+1)(2n+3)} &=&-1+2t(2),\\
32\sum^\infty_{n=1}\frac{(O_n^{(2)})^2-O_n^{(4)}}{2(2n+1)(2n+3)} &=& 
1-2t(2)+8t(2,2),\\
128\sum^\infty_{n=1}\frac{(O_n^{(2)})^3-3O_n^{(2)}O_n^{(4)}+2O_n^{(6)}}{6(2n+1)(2n+3)} &=&
-1+2t(2)-8t(2,2)+32t(2,2,2).
\end{eqnarray*}
In the end of this section we give another example: $\eta_{0,1,1}(e_1h_1)$ with $m=1$.
\begin{proposition}
For any positive integer $b$ and a real number $z\in(-1,0]$, we have
$$
\sum^\infty_{n=1}\frac{H_n(z)^2}{(n+b+z)(n+b+1+z)}
=\frac{\zeta(2;z)}b-\frac{H_b(z)}{b^2}+\frac2b\sum^b_{r=1}\frac{H_r(z)}r.
$$
\end{proposition}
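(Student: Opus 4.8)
The plan is to identify the left-hand side as $\eta_{0^b,1,1}(u)$ with $u=H_n(z)^2$ and $m=1$, expand $u$ in the monomial basis of $\QSym$, and then invoke the recursions of Theorem \ref{thm.33} termwise. Since $\eta_{0^b,1,1}(u)=\sum_{n=1}^\infty u/((n+b+z)(n+b+1+z))$ by definition, this indeed reproduces the sum in question.

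First I would note that with $m=1$ the polynomials $p_1$, $e_1$, $h_1$ all equal $H_n(z)$, so $H_n(z)^2=e_1 h_1$. Applying Eq.\,(\ref{eq.21}) with $k=1$, $n=2$ gives $e_1 h_1=N_{2,1}+2N_{2,2}$, and since $N_{2,1}=p_2=M_{(2)}$ and $N_{2,2}=e_2=M_{(1,1)}$ (here $m=1$, so $M_{(\alpha_1,\ldots)}=\zeta_n(\alpha_1,\ldots;z)$), I obtain the expansion $H_n(z)^2=M_{(2)}+2M_{(1,1)}$. By linearity of the $H$-function the claim reduces to computing $\eta_{0^b,1^2}(M_{(2)})$ and $\eta_{0^b,1^2}(M_{(1,1)})$ separately.

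Next I would apply Theorem \ref{thm.33} with $m=1$ to each piece. The composition $(1)$ is the base case: part (1) gives $\eta_{0^r,1^2}(M_{(1)})=H_r(z)/r$, where the residual $\zeta$-sum $\sum_{r=0}^{m-2}$ is empty at $m=1$. For $M_{(2)}$ the composition is $I=(2)$ with $\alpha_1\ge 2$, so part (3) applies and expresses $\eta_{0^b,1^2}(M_{(2)})$ through $\eta_{0^b,1^2}(M_{(1)})$ and a single surviving term $\frac1b\zeta(2;z)$, yielding $\zeta(2;z)/b-H_b(z)/b^2$. For $M_{(1,1)}$ we have $\alpha_k=1$ and $I\neq(1)$, so part (2) applies, its $\zeta$-sum again being empty at $m=1$, and it yields $\frac1b\sum_{r=1}^b \eta_{0^r,1^2}(M_{(1)})=\frac1b\sum_{r=1}^b H_r(z)/r$.

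Finally I would combine the two evaluations with the coefficients $1$ and $2$ from the monomial expansion to recover the stated formula. The computation is essentially bookkeeping once the expansion $H_n(z)^2=M_{(2)}+2M_{(1,1)}$ is in place; the only subtle point, and what produces such a clean closed form rather than a longer combination of multiple Hurwitz values, is the vanishing of each $\sum_{r=0}^{m-2}$ sum at $m=1$, so that only the $H_r(z)/r$ contributions and the single $\zeta(2;z)$ term remain.
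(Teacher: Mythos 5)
Your proposal is correct and follows essentially the same route as the paper: both rewrite $H_n(z)^2=e_1h_1=p_2+2e_2=M_{(2)}+2M_{(1,1)}$ via Eq.\,(\ref{eq.21}) and then evaluate $\eta_{0^b,1^2}$ on each piece using Theorem \ref{thm.33} with $m=1$. You simply spell out the termwise bookkeeping (which parts (1)--(3) apply, and the vanishing of the $\sum_{r=0}^{m-2}$ sums at $m=1$) that the paper's proof leaves implicit.
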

\begin{proof}
Since $h_1e_1=p_2+2e_2$ (ref. Eq.\,(\ref{eq.21})), we apply Theorem 3 with $m=1$ to get
\begin{eqnarray*}
\eta_{0^b,1,1}(e_1h_1) &=& \eta_{0^b,1,1}(p_2)+2\eta_{0^b,1,1}(e_2) \\
&=& \frac{\zeta(2;z)}b-\frac{H_b(z)}{b^2}+\frac2b\sum^b_{r=1}\frac{H_r(z)}r.
\end{eqnarray*}
It is known that 
$$
\eta_{0^b,1,1}(e_1h_1)=\sum^\infty_{n=1}\frac{H_n(z)^2}{(n+b+z)(n+b+1+z)},
$$
thus we complete the proof.
\end{proof}
If we set $z=0$ and $z=-1/2$ in the above proposition, then we have
\begin{eqnarray}\label{eq.311}
\sum^\infty_{n=1}\frac{H_n^2}{(n+b)(n+b+1)}
&=&\frac{\zeta(2)}{b}+\frac{bH_b^{(2)}+bH_b^2-H_b}{b^2},\\
\sum^\infty_{n=1}\frac{O_n^2}{(2n+2b-1)(2n+2b+1)}
&=&\frac{t(2)}{4b}-\frac{O_b}{8b^2}+\frac1{4b}\sum^b_{r=1}\frac{O_r}r.\label{eq.312}
\end{eqnarray}
The first identity with $b=1$ appears in 
\cite[Corollary 5]{Cho}, \cite[Theorem 1]{Hof1}.

\section{The general formula for $\eta_{0^a,p}$}
In this section, we derive the general formula for $\eta_{0^a,p}$ for nonnegative integer $a$.
First we have the following trivial results.
\begin{theorem}\label{thm.41}
For any compositiion $I=(\alpha_1,\ldots,\alpha_k)$, we have
\begin{eqnarray*}
\eta_p(M_I) &=& \zeta(m\alpha_1,\ldots,m\alpha_k+p;z)+\zeta(m\alpha_1,\ldots,m\alpha_k,p;z),\\
\eta_{0,p}(M_I) &=& \zeta(m\alpha_1,\ldots,m\alpha_k,p;z),
\end{eqnarray*}
where $p\geq 2$ is a positive integer.
\end{theorem}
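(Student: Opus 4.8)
The plan is to unfold the definitions of $\eta_p$ and $\eta_{0,p}$ at the monomial $M_I=\zeta_n(m\alpha_1,\ldots,m\alpha_k;z)$ and then interchange the two summations, exactly as in the convergence theorem and in the proof of Theorem \ref{thm.32}. Recall that $M_I$ itself depends on the outer index $n$ through the truncation $x_i=1/(i+z)^m$ for $1\leq i\leq n$. Since $p\geq 2$, everything is absolutely convergent: by Eq.\,(\ref{eq.22}) the whole double series is dominated by $\eta_2(M_I)$, so interchanging the order of summation is legitimate.

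For the first formula I would write
$$\eta_p(M_I)=\sum_{n=1}^\infty\frac1{(n+z)^p}\sum_{1\leq n_1<\cdots<n_k\leq n}\frac1{(n_1+z)^{m\alpha_1}\cdots(n_k+z)^{m\alpha_k}},$$
and swap the order so that the outer sum runs over $1\leq n_1<\cdots<n_k$ and the inner sum runs over $n\geq n_k$. The crucial bookkeeping step is to split the tail $\sum_{n\geq n_k}(n+z)^{-p}$ into its diagonal term $n=n_k$ and the remainder $n>n_k$. The diagonal term merges the exponent $p$ onto the last factor, producing $\zeta(m\alpha_1,\ldots,m\alpha_k+p;z)$, while the remainder introduces a new largest index and yields $\zeta(m\alpha_1,\ldots,m\alpha_k,p;z)$. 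Adding the two gives the first identity.

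For $\eta_{0,p}$ the same unfolding gives an inner sum $\sum_{n\geq n_k}(n+1+z)^{-p}$; re-indexing $N=n+1$ turns this into $\sum_{N>n_k}(N+z)^{-p}$, so the new index $N$ is \emph{strictly} larger than $n_k$ and there is no diagonal contribution. Hence only the depth-raised term survives and one obtains $\eta_{0,p}(M_I)=\zeta(m\alpha_1,\ldots,m\alpha_k,p;z)$.

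As these are labelled trivial results, I expect no serious obstacle. The only point requiring any care is keeping track of the strict versus non-strict inequality produced by the shift $n\mapsto n+1$ in the denominator of $\eta_{0,p}$, which is precisely what distinguishes its single-term answer from the two-term answer for $\eta_p$.
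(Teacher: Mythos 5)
Your proposal is correct and matches the paper's approach: the paper states Theorem \ref{thm.41} without proof as ``trivial,'' but the intended argument is exactly your unfolding-and-splitting computation, whose $p=2$ instance is already carried out verbatim in Eq.\,(\ref{eq.22}) (and whose $\eta_{1,1}$ analogue appears in the proof of Theorem \ref{thm.32}). Your added care about the diagonal term for $\eta_p$ versus the strict shift $n\mapsto n+1$ for $\eta_{0,p}$, and the justification of interchanging summations via domination by $\eta_2(M_I)$, are precisely the right points and introduce no gap.
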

Let $a\geq 0$ and $p\geq 1$ be two integers, we define a function:
\begin{equation}\label{eq.41}
T_{0^ap}=\left\{\begin{array}{ll}
\eta_{0^a,1^2}, &\mbox{ if }p=1;\\
\eta_{0^a,p}-\eta_{0^{a+1},p},&\mbox{ if }p\geq 2.
\end{array}\right.
\end{equation}
\begin{theorem}\label{thm.42}
Let $a\geq 1$, $p\geq 2$ be integers, and $I=(\alpha_1,\ldots,\alpha_k)$ be any composition 
with $k\geq 2$. Then 
\begin{eqnarray}\label{eq.42}
T_{0^ap}(M_I) &=&\sum^{m\alpha_k}_{\ell=2}{m\alpha_k+p-\ell-1\choose p-1}
\frac{(-1)^{m\alpha_k-\ell}}{a^{p+m\alpha_k-\ell}}\zeta(m\alpha_1,\ldots,m\alpha_{k-1},\ell;z) \\
&&+\sum^p_{\ell=2}{m\alpha_k+p-\ell-1\choose m\alpha_k-1} \nonumber
\frac{(-1)^{m\alpha_k}}{a^{p+m\alpha_k-\ell}}\zeta(m\alpha_1,\ldots,m\alpha_{k-1},\ell;z) \\
&&+\sum^p_{\ell=1}{m\alpha_k+p-\ell-1\choose m\alpha_k-1}
\frac{(-1)^{m\alpha_k-1}}{a^{p+m\alpha_k-\ell}} \nonumber
\sum^a_{j=1}T_{0^j\ell}(M_{(\alpha_1,\ldots,\alpha_{k-1})}).
\end{eqnarray}
\end{theorem}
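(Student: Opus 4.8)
The plan is to reduce $T_{0^ap}(M_I)$ to a single iterated series, apply the partial fraction decomposition of Lemma~\ref{lma.23} to the last summation variable, and then reorganize the resulting pieces, taking special care of the one divergent term that appears. Since $p\geq 2$, we have $T_{0^ap}=\eta_{0^a,p}-\eta_{0^{a+1},p}$. Writing $M_I=\zeta_n(m\alpha_1,\ldots,m\alpha_k;z)$ and interchanging the order of summation as in the proof of Theorem~\ref{thm.32}, the inner tails telescope,
$$
\sum_{n=n_k}^\infty\frac1{(n+a+z)^p}-\sum_{n=n_k}^\infty\frac1{(n+a+1+z)^p}=\frac1{(n_k+a+z)^p},
$$
so that
$$
T_{0^ap}(M_I)=\sum_{1\leq n_1<\cdots<n_k}\frac{1}{(n_1+z)^{m\alpha_1}\cdots(n_{k-1}+z)^{m\alpha_{k-1}}}\cdot\frac{1}{(n_k+z)^{m\alpha_k}(n_k+a+z)^p}.
$$

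Next I would apply Lemma~\ref{lma.23} with $x=n_k+z$ and exponents $m\alpha_k$ and $p$, writing the factor $1/\bigl((n_k+z)^{m\alpha_k}(n_k+a+z)^p\bigr)$ as a combination of terms $(n_k+z)^{-\ell}$ for $1\le\ell\le m\alpha_k$ and $(n_k+a+z)^{-\ell}$ for $1\le\ell\le p$. The basic convergent tool for reorganizing the shifted terms is the telescoping identity
$$
\sum_{n_k>n_{k-1}}\Bigl(\frac{1}{(n_k+z)^\ell}-\frac{1}{(n_k+a+z)^\ell}\Bigr)=\sum_{j=1}^a\frac{1}{(n_{k-1}+j+z)^\ell},
$$
valid for every $\ell\ge 1$. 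Summing the $(n_k+z)^{-\ell}$ pieces over $1\le n_1<\cdots<n_k$ gives $\zeta(m\alpha_1,\ldots,m\alpha_{k-1},\ell;z)$ for $\ell\ge 2$, which accounts for the first sum of Eq.\,(\ref{eq.42}); while for the $(n_k+a+z)^{-\ell}$ pieces with $\ell\ge 2$ the telescoping identity together with the definition Eq.\,(\ref{eq.41}) of $T_{0^j\ell}$ yields
$$
\sum_{1\le n_1<\cdots<n_k}\frac{1}{\prod_{i=1}^{k-1}(n_i+z)^{m\alpha_i}\,(n_k+a+z)^\ell}=\zeta(m\alpha_1,\ldots,m\alpha_{k-1},\ell;z)-\sum_{j=1}^a T_{0^j\ell}(M_{(\alpha_1,\ldots,\alpha_{k-1})}),
$$
where the hypothesis $k\ge 2$ guarantees that $(\alpha_1,\ldots,\alpha_{k-1})$ is a genuine composition. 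The $\zeta$-part feeds the second sum of Eq.\,(\ref{eq.42}) and the $T$-part, after the sign flip, the third sum.

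The main obstacle is that the $\ell=1$ contributions are individually divergent: neither $\zeta(m\alpha_1,\ldots,m\alpha_{k-1},1;z)$ nor the separate $(n_k+z)^{-1}$ and $(n_k+a+z)^{-1}$ sums converge, so the reorganization above cannot be carried out term by term at $\ell=1$. I would resolve this by combining the two $\ell=1$ pieces produced by Lemma~\ref{lma.23} \emph{before} summing over the $n_i$. Their binomial coefficients agree, $\binom{m\alpha_k+p-2}{p-1}=\binom{m\alpha_k+p-2}{m\alpha_k-1}$, while their signs are opposite, so they merge into the single convergent difference
$$
\binom{m\alpha_k+p-2}{m\alpha_k-1}\frac{(-1)^{m\alpha_k-1}}{a^{p+m\alpha_k-1}}\Bigl(\frac{1}{n_k+z}-\frac{1}{n_k+a+z}\Bigr),
$$
which is $O(n_k^{-2})$ and hence summable. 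Applying the telescoping identity with $\ell=1$ shows that, after summation, this combined term equals precisely the $\ell=1$ summand of the third sum of Eq.\,(\ref{eq.42}) and contributes nothing to the first two sums; this is exactly why those two sums may be taken to start at $\ell=2$. Collecting the convergent $\ell\ge 2$ pieces together with this combined $\ell=1$ term then yields Eq.\,(\ref{eq.42}), completing the proof.
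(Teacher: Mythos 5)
Your proof is correct and takes essentially the same route as the paper's: both reduce $T_{0^ap}(M_I)$ to a single iterated series, apply Lemma \ref{lma.23} to the factor $1/\bigl((n_k+z)^{m\alpha_k}(n_k+a+z)^p\bigr)$, merge the two $\ell=1$ pieces into the single telescoping difference $\frac{1}{n_k+z}-\frac{1}{n_k+a+z}$, and convert the shifted $(n_k+a+z)^{-\ell}$ terms into $\zeta(m\alpha_1,\ldots,m\alpha_{k-1},\ell;z)$ minus $\sum_{j=1}^a T_{0^j\ell}(M_{(\alpha_1,\ldots,\alpha_{k-1})})$. Your explicit justification of why the divergent $\ell=1$ contributions must be combined before summation only makes precise a step the paper performs implicitly by writing the merged term directly in its decomposition.
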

\begin{proof}
Since 
$$
T_{0^ap}(M_I)=\sum_{1\leq n_1<\cdots<n_k}
\frac1{(n_1+z)^{m\alpha_1}\cdots(n_k+z)^{m\alpha_k}(n_k+a+z)^p},
$$
we apply Lemma \ref{lma.23} to the last two factors $1/((n_k+z)^{m\alpha_k}(n_k+a+z)^p)$. Then
\begin{eqnarray*}
T_{0^ap}(M_I) &=& \sum_{1\leq n_1<\cdots<n_k}
\frac1{(n_1+z)^{m\alpha_1}\cdots(n_{k-1}+z)^{m\alpha_{k-1}}}\\
&&\quad\times\left[
{m\alpha_k+p-2\choose p-1}\frac{(-1)^{m\alpha_k-1}}{a^{p+m\alpha_k-1}}\left(
\frac1{n_k+z}-\frac1{n_k+z+a}\right) \right.\\
&&\qquad\qquad+\sum^{m\alpha_k}_{\ell=2}{m\alpha_k-\ell+p-1\choose p-1}
\frac{(-1)^{m\alpha_k-\ell}}{a^{p+m\alpha_k-\ell}(n_k+z)^\ell}\\
&&\qquad\qquad\qquad\left.+\sum^p_{\ell=2}{m\alpha_k-\ell+p-1\choose m\alpha_k-1}
\frac{(-1)^{m\alpha_k}}{a^{p+m\alpha_k-\ell}(n_k+z+a)^\ell}\right]. 
\end{eqnarray*}
There are three summands. The first summand is 
$$
{m\alpha_k+p-2\choose p-1}\frac{(-1)^{m\alpha_k-1}}{a^{p+m\alpha_k-1}}
\sum_{1\leq n_1<\cdots<n_{k-1}}
\frac1{(n_1+z)^{m\alpha_1}\cdots(n_{k-1}+z)^{m\alpha_{k-1}}}
\sum^a_{\ell=1}\frac1{n_{k-1}+\ell+z}.
$$
This can be written as the following form
$$
{m\alpha_k+p-2\choose p-1}\frac{(-1)^{m\alpha_k-1}}{a^{p+m\alpha_k-1}}
\left(\eta_{0,1^2}+\eta_{0^2,1^2}+\cdots+\eta_{0^a,1^2}\right)
\left(M_{(\alpha_1,\ldots,\alpha_{k-1})}\right).
$$
It can be easily to see that the second summand is
$$
\sum^{m\alpha_k}_{\ell=2}{m\alpha_k-\ell+p-1\choose p-1}
\frac{(-1)^{m\alpha_k-\ell}}{a^{p+m\alpha_k-\ell}}\,
\zeta(m\alpha_1,\ldots,m\alpha_{k-1},\ell;z).
$$
Since 
$$
\sum_{n_k=n_{k-1}+1}^\infty\frac1{(n_k+z+a)^\ell}
=\sum_{n_k=n_{k-1}+1}^\infty\frac1{(n_k+z)^\ell}-\sum^a_{j=1}\frac1{(n_{k-1}+j+z)^\ell},
$$
We can write the third summand as 
\begin{eqnarray*}
\lefteqn{\sum^p_{\ell=2}{m\alpha_k-\ell+p-1\choose m\alpha_k-1}
\frac{(-1)^{m\alpha_k}}{a^{p+m\alpha_k-\ell}}\zeta(m\alpha_1,\ldots,m\alpha_{k-1},\ell;z)} \\
&-&\sum^p_{\ell=2}{m\alpha_k-\ell+p-1\choose m\alpha_k-1}
\frac{(-1)^{m\alpha_k}}{a^{p+m\alpha_k-\ell}}
\left(T_{0\ell}+T_{0^2\ell}+\cdots+T_{0^a\ell}\right)
\left(M_{(\alpha_1,\ldots,\alpha_{k-1})}\right).
\end{eqnarray*}
Combining these three results together we complete the proof.
\end{proof}

\begin{theorem}\label{thm.43}
For a pair of positive integers $a,p$, we have
\begin{eqnarray}\label{eq.43}
\lefteqn{T_{0^ap}(p_k)}\\ \nonumber
&=& \sum^{mk}_{\ell=2}{mk+p-\ell-1\choose p-1}\frac{(-1)^{mk-\ell}}{a^{p+mk-\ell}}\zeta(\ell;z) 
+\sum^p_{\ell=2}{mk+p-\ell-1\choose mk-1}\frac{(-1)^{mk}}{a^{p+mk-\ell}}\zeta(\ell;z)\\
&&+\sum^p_{\ell=1}{mk+p-\ell-1\choose mk-1}\frac{(-1)^{mk-1}}{a^{p+mk-\ell}}H_a^{(\ell)}(z).\nonumber
\end{eqnarray}
\end{theorem}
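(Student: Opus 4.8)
The crucial point is that $p_k=M_{(k)}$ is the monomial symmetric function of the single-part composition $(k)$, so $T_{0^ap}(p_k)$ is a depth-one instance that the recursion of Theorem~\ref{thm.42} (valid only for depth $\ge 2$) never reaches; it must be computed directly and serves as the base case, exactly as Theorem~\ref{thm.33}(1) did for the $\eta_{0^b,1^2}$ family. The plan is to first collapse $T_{0^ap}(p_k)$ to a single ordinary series and then expand by the partial fraction identity of Lemma~\ref{lma.23}.

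First I would reduce everything to the series $\sum_{n=1}^\infty \frac{1}{(n+z)^{mk}(n+a+z)^p}$. For $p\ge 2$ one writes $T_{0^ap}(M_{(k)})=(\eta_{0^a,p}-\eta_{0^{a+1},p})(M_{(k)})$; after interchanging the order of summation (justified by absolute convergence) this equals $\sum_{n_1\ge1}\frac{1}{(n_1+z)^{mk}}\sum_{n\ge n_1}\left(\frac{1}{(n+a+z)^p}-\frac{1}{(n+a+1+z)^p}\right)$, and the inner sum telescopes to $\frac{1}{(n_1+a+z)^p}$. For $p=1$ one instead has $T_{0^a1}=\eta_{0^a,1^2}$, whose inner sum $\sum_{n\ge n_1}\left(\frac{1}{n+a+z}-\frac{1}{n+a+1+z}\right)$ likewise telescopes to $\frac{1}{n_1+a+z}$; either way one arrives at the same series, uniformly in $p\ge 1$.

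Next I would apply Lemma~\ref{lma.23} with $x=n+z$, shift $a$, and exponents $mk$ on $x$ and $p$ on $x+a$, so that $\frac{1}{(n+z)^{mk}(n+a+z)^p}$ splits into terms $\binom{mk-\ell+p-1}{p-1}\frac{(-1)^{mk-\ell}}{a^{p+mk-\ell}(n+z)^\ell}$ for $1\le\ell\le mk$ and $\binom{mk-\ell+p-1}{mk-1}\frac{(-1)^{mk}}{a^{p+mk-\ell}(n+a+z)^\ell}$ for $1\le\ell\le p$. Summing over $n\ge1$, each $\ell\ge2$ term in the first group contributes $\zeta(\ell;z)$ and each $\ell\ge2$ term in the second group contributes $\zeta(\ell;z)-H_a^{(\ell)}(z)$, via $\sum_{n\ge1}(n+a+z)^{-\ell}=\zeta(\ell;z)-H_a^{(\ell)}(z)$. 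These account for the first $\zeta$-sum, the second $\zeta$-sum, and the $\ell\ge2$ portion of the $H$-sum in the statement.

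The one delicate point---and the main obstacle---is the $\ell=1$ terms, which are individually divergent when summed over $n$. The remedy is to combine them before summing: their binomial factors agree because $\binom{mk+p-2}{p-1}=\binom{mk+p-2}{mk-1}$, while their signs are opposite, so the two $\ell=1$ terms merge into $\binom{mk+p-2}{mk-1}\frac{(-1)^{mk-1}}{a^{p+mk-1}}\sum_{n\ge1}\left(\frac{1}{n+z}-\frac{1}{n+a+z}\right)$, which telescopes to $H_a^{(1)}(z)=H_a(z)$ and supplies precisely the $\ell=1$ term of the third sum. Since $mk-\ell+p-1=mk+p-\ell-1$, no reindexing of the binomials is required, and collecting all contributions yields the three sums exactly as stated.
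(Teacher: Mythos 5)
Your proof is correct and takes essentially the same route as the paper: the paper proves Theorem \ref{thm.43} by running the argument of Theorem \ref{thm.42} at depth one, i.e.\ applying the partial fraction decomposition of Lemma \ref{lma.23} to $1/((n+z)^{mk}(n+a+z)^p)$, combining the two divergent $\ell=1$ pieces before summation so they telescope to $H_a(z)$, and using $\sum_{n\geq 1}(n+a+z)^{-\ell}=\zeta(\ell;z)-H_a^{(\ell)}(z)$ for $\ell\geq 2$. Your explicit reduction of $T_{0^ap}(p_k)$ to the single series $\sum_{n\geq 1}(n+z)^{-mk}(n+a+z)^{-p}$ (uniformly in $p\geq 1$, covering the $T_{0^a1}=\eta_{0^a,1^2}$ case) just makes precise what the paper compresses into ``similar as the above theorem'' together with the terminal convention $\sum^a_{j=1}T_{0^j\ell}(M_0)=H_a^{(\ell)}(z)$.
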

\begin{proof}
The proof is similar as the above theorem, but only to note that 
$\sum^a_{j=1}T_{0^j\ell}(M_0)=H_a^{(\ell)}(z)$.
Then we get the result.
\end{proof}
It is clearly that $\eta_{0^a,p}$ can be solved by using Theorem \ref{thm.41}, 
Theorem \ref{thm.42}, and Theorem \ref{thm.43}.
Thus we conclude that the following theorem.
\begin{theorem}\label{thm.44}
For a pair of integers $a\geq 0$, $p\geq 2$, and any composition $I=(\alpha_1,\ldots,\alpha_k)$,
$$
\eta_{0^a,p}(M_I)=\sum^\infty_{n=1}\frac{M_I}{(n+a+z)^p}
$$
can be expressed as a linear combination of multiple Hurwitz zeta functions 
and some speical values of $H_n^{(m)}(z)$.
\end{theorem}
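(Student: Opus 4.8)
The plan is to bootstrap $\eta_{0^a,p}(M_I)$ from the two cases already made explicit in Theorem~\ref{thm.41} by exploiting the telescoping built into the definition of $T_{0^ap}$ in Eq.~(\ref{eq.41}). Since for $p\geq 2$ we have $T_{0^bp}=\eta_{0^b,p}-\eta_{0^{b+1},p}$, summing over $b=1,\ldots,a-1$ collapses to
$$
\eta_{0^a,p}(M_I)=\eta_{0,p}(M_I)-\sum^{a-1}_{b=1}T_{0^bp}(M_I),
$$
a finite identity valid for every $a\geq 1$ (the sum is empty when $a=1$). The leading term $\eta_{0,p}(M_I)=\zeta(m\alpha_1,\ldots,m\alpha_k,p;z)$ is already a single multiple Hurwitz zeta value by Theorem~\ref{thm.41}, and the remaining case $a=0$ is likewise covered directly there. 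Thus everything reduces to showing that each $T_{0^bp}(M_I)$ with $b\geq 1$ is a linear combination of multiple Hurwitz zeta functions and special values of $H_n^{(m)}(z)$.

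For that I would induct on the length $k$ of the composition $I=(\alpha_1,\ldots,\alpha_k)$. When $k=1$ the argument $M_{(\alpha_1)}$ is the power sum $p_{\alpha_1}=H_n^{(m\alpha_1)}(z)$, and Theorem~\ref{thm.43} already expresses $T_{0^bp}(p_{\alpha_1})$ explicitly through the values $\zeta(\ell;z)$ and the finite sums $H_b^{(\ell)}(z)$, which is exactly the required form and serves as the base case. When $k\geq 2$, Theorem~\ref{thm.42} rewrites $T_{0^bp}(M_I)$ as an explicit combination of multiple Hurwitz zeta values together with the quantities $T_{0^j\ell}(M_{(\alpha_1,\ldots,\alpha_{k-1})})$ for $1\leq j\leq b$ and $1\leq \ell\leq p$; here the composition has strictly smaller length, so the inductive hypothesis applies to each such term whenever $\ell\geq 2$.

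The one genuinely separate branch is $\ell=1$, where by Eq.~(\ref{eq.41}) we have $T_{0^j1}=\eta_{0^j,1^2}$, which is no longer of the present type. These I would dispose of through the Section~3 machinery: Theorems~\ref{thm.32} and~\ref{thm.33} give a terminating recursion for $\eta_{0^j,1^2}(M_J)$ that again outputs only multiple Hurwitz zeta functions and values $H_n^{(m)}(z)$, as summarized in Theorem~\ref{thm.34}. Combining the telescoping identity with this double reduction --- induction on $k$ through Theorem~\ref{thm.42}, the length-one closure through Theorem~\ref{thm.43}, and the $\ell=1$ hand-off to Section~3 --- assembles $\eta_{0^a,p}(M_I)$ into a finite linear combination of the desired objects.

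I expect the main obstacle to be organizational rather than analytical: one must check that the nested recursion actually terminates and that every recursive call lands in a case that has been closed off. The descent is well founded because Theorem~\ref{thm.42} strictly lowers the composition length at each step while Theorem~\ref{thm.43} supplies the base, and the $\ell=1$ sub-calls are resolved entirely within Section~3. The point requiring care is finiteness: since each invocation emits only finitely many summands and the recursion has bounded depth, the end result is a finite --- not merely convergent --- linear combination, which is precisely what the statement asserts.
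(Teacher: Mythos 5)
Your proposal is correct and takes essentially the same route as the paper: the paper's proof of Theorem \ref{thm.44} is a one-line appeal to Theorems \ref{thm.41}, \ref{thm.42}, and \ref{thm.43}, and your write-up supplies exactly the details left implicit there --- telescoping $\eta_{0^a,p}=\eta_{0,p}-\sum_{b=1}^{a-1}T_{0^bp}$ from Eq.\,(\ref{eq.41}), induction on the length of $I$ via Theorem \ref{thm.42} with base case Theorem \ref{thm.43}, and resolving the $\ell=1$ terms $T_{0^j1}=\eta_{0^j,1^2}$ through Theorems \ref{thm.32}--\ref{thm.34}. Your explicit treatment of the $\ell=1$ branch and of well-foundedness/finiteness is a faithful elaboration, not a different method, so nothing further is needed.
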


For $p\geq 2$, by Theorem \ref{thm.41} we have $\eta_{0,p}(p_k)=\zeta(mk,p;z)$. Therefore we have 
a formula to evaluate $\eta_{0,0,p}(p_k)$ by using Eq.\,(\ref{eq.43}).
\begin{corollary}
For a positive integer $p\geq 2$, we have
\begin{eqnarray*}
\lefteqn{\eta_{0,0,p}(p_k)}\\ \nonumber
&=& \zeta(mk,p;z)-\sum^{mk}_{\ell=2}{mk+p-\ell-1\choose p-1}(-1)^{mk-\ell}\zeta(\ell;z) \\
&&-\sum^p_{\ell=2}{mk+p-\ell-1\choose mk-1}(-1)^{mk}\zeta(\ell;z)
+\sum^p_{\ell=1}{mk+p-\ell-1\choose mk-1}\frac{(-1)^{mk}}{(1+z)^\ell}.\nonumber
\end{eqnarray*}
\end{corollary}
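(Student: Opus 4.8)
The plan is to read off $\eta_{0,0,p}(p_k)$ directly from the definition of the auxiliary function $T_{0^a p}$ in Eq.\,(\ref{eq.41}), specialized to $a=1$. Indeed, since $p\geq 2$, that definition gives $T_{0 p}=\eta_{0,p}-\eta_{0,0,p}$, so that
$$
\eta_{0,0,p}(p_k)=\eta_{0,p}(p_k)-T_{0 p}(p_k).
$$
The first term on the right is immediate: because $p_k=M_{(k)}$ corresponds to the one-part composition $I=(k)$ with $\alpha_1=k$, Theorem \ref{thm.41} yields $\eta_{0,p}(p_k)=\zeta(mk,p;z)$.

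For the second term I would invoke Theorem \ref{thm.43}, which furnishes the closed form for $T_{0^a p}(p_k)$ in Eq.\,(\ref{eq.43}), and set $a=1$. At $a=1$ every factor $a^{p+mk-\ell}$ in the denominators equals $1$, and the special value $H_1^{(\ell)}(z)=1/(1+z)^\ell$ supplies the last sum. Hence
$$
T_{0 p}(p_k)=\sum^{mk}_{\ell=2}{mk+p-\ell-1\choose p-1}(-1)^{mk-\ell}\zeta(\ell;z)
+\sum^p_{\ell=2}{mk+p-\ell-1\choose mk-1}(-1)^{mk}\zeta(\ell;z)
+\sum^p_{\ell=1}{mk+p-\ell-1\choose mk-1}\frac{(-1)^{mk-1}}{(1+z)^\ell}.
$$
Substituting this and $\eta_{0,p}(p_k)=\zeta(mk,p;z)$ into the displayed difference produces the asserted formula.

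The only point deserving attention is the sign of the final sum: subtracting $T_{0 p}(p_k)$ converts the factor $(-1)^{mk-1}$ into $(-1)^{mk}$, which is exactly the sign recorded in the statement, while the two $\zeta(\ell;z)$-sums pick up overall minus signs. Beyond this bookkeeping the argument is pure substitution, so I anticipate no genuine obstacle; the corollary is simply the $a=1$ instance of the apparatus already established in Theorems \ref{thm.41} and \ref{thm.43}.
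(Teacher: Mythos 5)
Your proposal is correct and is essentially the paper's own argument: the paper likewise obtains the corollary by writing $\eta_{0,0,p}(p_k)=\eta_{0,p}(p_k)-T_{0p}(p_k)$ via Eq.\,(\ref{eq.41}) with $a=1$, evaluating $\eta_{0,p}(p_k)=\zeta(mk,p;z)$ from Theorem \ref{thm.41}, and specializing Eq.\,(\ref{eq.43}) to $a=1$ with $H_1^{(\ell)}(z)=1/(1+z)^\ell$. Your sign bookkeeping, turning $(-1)^{mk-1}$ into $(-1)^{mk}$ upon subtraction, is exactly right.
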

It is known that the function $\eta_{0,0,p}(p_k)$ 
is $\sum^\infty_{n=1}\frac{H_n^{(mk)}(z)}{(n+2+z)^p}$.
Therefore this corollary gives the following identity.
\begin{eqnarray*}
\lefteqn{\sum^\infty_{n=1}\frac{H_n^{(k)}(z)}{(n+2+z)^p}}\\
&=& \zeta(k,p;z)-\sum^{k}_{\ell=2}{k+p-\ell-1\choose p-1}(-1)^{k-\ell}\zeta(\ell;z) \\
&&-\sum^p_{\ell=2}{k+p-\ell-1\choose k-1}(-1)^{k}\zeta(\ell;z)
+\sum^p_{\ell=1}{k+p-\ell-1\choose k-1}\frac{(-1)^{k}}{(1+z)^\ell}.
\end{eqnarray*}
We list the formula with $z=0$ in the following. For $k\geq 1$ and $p\geq 2$, we have
\begin{eqnarray}\label{eq.44}
\sum^\infty_{n=1}\frac{H_n^{(k)}}{(n+2)^p}
&=& \zeta(k,p)-\sum^{k}_{\ell=2}{k+p-\ell-1\choose p-1}(-1)^{k-\ell}\zeta(\ell) \\
&&-\sum^p_{\ell=2}{k+p-\ell-1\choose k-1}(-1)^{k}\zeta(\ell)
+(-1)^k{k+p-1\choose p-1}.\nonumber
\end{eqnarray}
The last summation is used \cite[Eq.\,(5.9)]{GKP} 
$\sum^p_{\ell=1}{k+p-\ell-1\choose k-1}={k+p-1\choose p-1}$ to simplify.

The following we give an evaluation of $\eta_{0,0,2}(e_k)$ using our theorems.
Let $a=1$ and $p=2$ in Theorem \ref{thm.42}, we obtain
\begin{eqnarray*}
T_{02}(e_k)&=&
\sum^m_{\ell=2}(-1)^{m-\ell}(m+1-\ell)\zeta(\{m\}^{k-1},\ell;z)
+(-1)^m\zeta(\{m\}^{k-1},2;z) \\
&&\qquad+(-1)^{m-1}mT_{01}(e_{k-1})+(-1)^{m-1}T_{02}(e_{k-1}).
\end{eqnarray*}
The term $T_{01}(e_{k-1})$ is $\eta_{0,1,1}(e_{k-1})$. 
Applying Proposition \ref{pro.35} with $b=1$ we have
$$
\eta_{0,1,1}(e_k)=
\frac{(-1)^{k(m-1)}}{1+z}+\sum^{k-1}_{j=0}(-1)^{j(m-1)}\sum^m_{\ell=2}
(-1)^{m-\ell}\zeta(\{m\}^{k-j-1},\ell;z).
$$
Substitue it into the above identity and solve the recursive relation to the final step:
\begin{eqnarray*}
\lefteqn{T_{02}(e_k)=(-1)^{(k-1)(m-1)}T_{02}(e_1)}\\
&+& \sum^{k-1}_{j=1}(-1)^{(j-1)(m-1)}\left[
\sum^m_{\ell=2}(-1)^{m-\ell}(m+1-\ell)\zeta(\{m\}^{k-j},\ell;z)+(-1)^m\zeta(\{m\}^{k-j},2;z) \right.\\
&+&\left.\frac{(-1)^{(m-1)(k-j+1)}m}{1+z}
+m\cdot\sum^{k-j-1}_{r=0}(-1)^{(r+1)(m-1)}\sum^m_{\ell=2}(-1)^{m-\ell}
\zeta(\{m\}^{k-r-j-1},\ell;z)\right].
\end{eqnarray*}
Because $e_1=p_1$, we apply Theorem \ref{thm.43} with $a=1$, $p=2$, and $k=1$:
$$
T_{02}(e_1)=\sum^m_{\ell=2}(-1)^{m-\ell}(m+1-\ell)\zeta(\ell;z)+
(-1)^m\zeta(2;z)+\frac{(-1)^{m-1}m}{1+z}+\frac{(-1)^{m-1}}{(1+z)^2}.
$$
Substitue it into the last formula of $T_{02}(e_k)$:
\begin{eqnarray*}
\lefteqn{T_{02}(e_k)=\frac{(-1)^{k(m-1)}}{(1+z)^2}}\\
&+& \sum^{k-1}_{j=1}(-1)^{(j-1)(m-1)}\left[
\sum^m_{\ell=2}(-1)^{m-\ell}(m+1-\ell)\zeta(\{m\}^{k-j},\ell;z)+(-1)^m\zeta(\{m\}^{k-j},2;z) \right.\\
&+&\left.\frac{(-1)^{(m-1)(k-j+1)}m}{1+z}\right]
+m\cdot\sum^{k-2}_{j=0}\sum^{j}_{r=0}(-1)^{(k-r-1)(m-1)}\sum^m_{\ell=2}(-1)^{m-\ell}
\zeta(\{m\}^{r},\ell;z).
\end{eqnarray*}
If we recombine the last summation, then we have a more simple form
$$
m\cdot\sum^{k-2}_{j=0}(k-1-j)(-1)^{(k-1-j)(m-1)}\sum^m_{\ell=2}(-1)^{m-\ell}
\zeta(\{m\}^j,\ell;z).
$$
Therefore we get the final form:
\begin{eqnarray*}
\lefteqn{T_{02}(e_k)=\frac{(-1)^{k(m-1)}}{(1+z)^2}}\\
&+& \sum^{k-1}_{j=1}(-1)^{(j-1)(m-1)}\left[
\sum^m_{\ell=2}(-1)^{m-\ell}(m+1-\ell)\zeta(\{m\}^{k-j},\ell;z)+(-1)^m\zeta(\{m\}^{k-j},2;z) \right.\\
&+&\left.\frac{(-1)^{(m-1)(k-j+1)}m}{1+z}\right]
+m\cdot\sum^{k-2}_{j=0}(k-1-j)(-1)^{(k-1-j)(m-1)}\sum^m_{\ell=2}(-1)^{m-\ell}
\zeta(\{m\}^j,\ell;z).
\end{eqnarray*}
Now $T_{02}(e_k)=\eta_{0,2}(e_k)-\eta_{0,0,2}(e_k)$ 
and from Theorem \ref{thm.41} $\eta_{0,2}(e_k)=\zeta(\{m\}^{k},2;z)$,
we have 
\begin{eqnarray}\label{eq.45}
\lefteqn{\sum^\infty_{n=1}\frac{P_k(H_n^{(m)}(z),-H_n^{(2m)}(z),\ldots,(-1)^{k+1}H_n^{(km)}(z))}{(n+2+z)^2}
=\zeta(\{m\}^k,2;z)-\frac{(-1)^{k(m-1)}}{(1+z)^2}}\\
&-& \sum^{k-1}_{j=1}(-1)^{(j-1)(m-1)}\left[  \nonumber
\sum^m_{\ell=2}(-1)^{m-\ell}(m+1-\ell)\zeta(\{m\}^{k-j},\ell;z)+(-1)^m\zeta(\{m\}^{k-j},2;z) \right.\\
&+&\left.\frac{(-1)^{(m-1)(k-j+1)}m}{1+z}\right]
-m\cdot\sum^{k-2}_{j=0}(k-1-j)(-1)^{(k-1-j)(m-1)}\sum^m_{\ell=2}(-1)^{m-\ell}
\zeta(\{m\}^j,\ell;z). \nonumber
\end{eqnarray}
The case $m=1$, $z=0$ was obtained in \cite{Hof1}:
\begin{equation}\label{eq.46}
\sum^\infty_{n=1}\frac{P_k(H_n,-H_n^{(2)},\ldots,(-1)^{k+1}H_n^{(k)})}{(n+2)^2}
=\sum^{k+2}_{j=2}\zeta(j)-(k+1).
\end{equation}
The following we present some examples with $(m,z)=(2,0)$ and $(2,-1/2)$.
For the case $(m,z)=(2,0)$, the general formula is 
\begin{eqnarray}\label{eq.47}
\lefteqn{\sum^\infty_{n=1}\frac{P_k(H_n^{(2)},-H_n^{(4)},\ldots,(-1)^{k+1}H_n^{(2k)})}{(n+2)^2}}\\ \nonumber
&=&(-1)^{k+1}(2k+1)+\zeta(\{2\}^{k+1})+2\sum^k_{j=1}(-1)^{k+1-j}(k+1-j)\zeta(\{2\}^j).
\end{eqnarray}
The following are the identities with $k=1,2,3$:
\begin{eqnarray*}
\sum^\infty_{n=1}\frac{H_n^{(2)}}{(n+2)^2}
&=& 3-2\zeta(2)+\zeta(2,2),\\
\sum^\infty_{n=1}\frac{(H_n^{(2)})^2-H_n^{(4)}}{2(n+2)^2}
&=& -5+4\zeta(2)-2\zeta(2,2)+\zeta(2,2,2),\\
\sum^\infty_{n=1}\frac{(H_n^{(2)})^3-3H_n^{(2)}H_n^{(4)}+2H_n^{(6)}}{6(n+2)^2}
&=& 7-6\zeta(2)+4\zeta(2,2)-2\zeta(2,2,2)+\zeta(2,2,2,2).
\end{eqnarray*}
The general formula of the case $(m,z)=(2,-1/2)$ is
\begin{eqnarray}\label{eq.48}
\lefteqn{\sum^\infty_{n=1}\frac{2^{(2k+2)}
P_k(O_n^{(2)},-O_n^{(4)},\ldots,(-1)^{k+1}O_n^{(2k)})}{(2n+3)^2}}\\ \nonumber
&=&(-1)^{k+1}(4k+4)+2^{2k+2}t(\{2\}^{k+1})
+2\sum^k_{j=1}(-1)^{k+1-j}(k+1-j)2^{2j}t(\{2\}^j).
\end{eqnarray}
The following are the identities with $k=1,2,3$:
\begin{eqnarray*}
2\sum^\infty_{n=1}\frac{O_n^{(2)}}{(2n+3)^2}
&=& 1-t(2)+2t(2,2),\\
2^3\sum^\infty_{n=1}\frac{(O_n^{(2)})^2-O_n^{(4)}}{(2n+3)^2}
&=& -3+2^2t(2)-2^3t(2,2)+2^4t(2,2,2),\\
2^4\sum^\infty_{n=1}\frac{(O_n^{(2)})^3-3O_n^{(2)}O_n^{(4)}+2O_n^{(6)}}{3(2n+3)^2}
&=& 2-3t(2)+2^3t(2,2)-2^4t(2,2,2)+2^5t(2,2,2,2).
\end{eqnarray*}

Let
\begin{equation}\label{eq.49}
E^{(m)}_{w,\ell}(z)=\sum_{\alpha_1+\cdots+\alpha_\ell=w}\zeta(m\alpha_1,\ldots,m\alpha_\ell;z).
\end{equation}
Hoffman \cite{Hof2}  investigated some properties of $E^{(2)}_{n,k}(0)$ and
Zhao \cite{Zhao} investigated some properties of $E^{(2)}_{n,k}(-1/2)$.
Chen et al. \cite{CCE} gave some formulas concerning $E^{(m)}_{n,k}(0)$.

From Theorem \ref{thm.32} we know that for $p\geq 2$ and $0\leq k\leq\ell$,
\begin{eqnarray*}
\eta_p(e_kh_{\ell-k}) &=& \sum^\ell_{j=k}{j\choose k}\eta_p(N_{\ell,j}) \\
&=& \sum^\ell_{j=k}{j\choose k}\sum_{\alpha_1+\cdots+\alpha_j=\ell}\eta_p(M_{(\alpha_1,\ldots,\alpha_j)}) \\
&=& \sum^\ell_{j=k}{j\choose k}\sum_{|\bm\alpha|=\ell}\left(
\zeta(m\alpha_1,\ldots,m\alpha_j+p;z)+\zeta(m\alpha_1,\ldots,m\alpha_j,p;z)\right).
\end{eqnarray*}
If we set $p=m$, then we can express $\eta_m(e_kh_{\ell-k})$ as a formula related $E^{(m)}_{w,\ell}(z)$:
\begin{proposition}
\begin{eqnarray*}
\lefteqn{\sum_{n=1}^\infty\frac{P_k(H_n^{(m)}(z),\ldots,(-1)^{k-1}H_n^{(km)}(z))
P_{\ell-k}(H_n^{(m)}(z),\ldots,H_n^{((\ell-k)m)}(z))}{(n+z)^m}}\\
&=& \sum^\ell_{j=k}{j-1\choose k-1}
\sum_{\alpha_1+\cdots+\alpha_j=\ell}\zeta(m\alpha_1,\cdots,m\alpha_j+m;z)
+\sum^{\ell+1}_{j=k+1}{j-1\choose k}E^{(m)}_{\ell+1,j}(z).
\end{eqnarray*}
\end{proposition}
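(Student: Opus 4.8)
The plan is to recognize the left-hand side as the value $\eta_m(e_k h_{\ell-k})$ of an $H$-function on a product of symmetric functions, feed this into the expansion for $\eta_p(e_k h_{\ell-k})$ derived immediately before the statement, and then reorganize the resulting multiple Hurwitz zeta values by a reindexing together with Pascal's rule.

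First I would record the identification of the series. With the specialization $x_i=1/(i+z)^m$ the symmetric functions become $e_k=P_k(H_n^{(m)}(z),\ldots,(-1)^{k-1}H_n^{(km)}(z))$ and $h_{\ell-k}=P_{\ell-k}(H_n^{(m)}(z),\ldots,H_n^{((\ell-k)m)}(z))$, so the series on the left is exactly $\eta_m(e_k h_{\ell-k})$. Setting $p=m$ in the identity established just above gives
$$\eta_m(e_k h_{\ell-k}) = \sum_{j=k}^{\ell}{j\choose k}\sum_{\alpha_1+\cdots+\alpha_j=\ell}\Bigl(\zeta(m\alpha_1,\ldots,m\alpha_j+m;z)+\zeta(m\alpha_1,\ldots,m\alpha_j,m;z)\Bigr).$$

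The key step is to read the two families of zeta values as pieces of the quantities $E^{(m)}_{\ell+1,j}(z)$. For a composition $\beta$ of $\ell+1$ into $j$ parts, let $A_j$ be the partial sum of $\zeta(m\beta_1,\ldots,m\beta_j;z)$ over those $\beta$ with last part $\geq 2$ and $B_j$ the partial sum over those with last part equal to $1$, so that $E^{(m)}_{\ell+1,j}(z)=A_j+B_j$. The substitution $\gamma=(\alpha_1,\ldots,\alpha_{j-1},\alpha_j+1)$ identifies the inner sum of the first family with $A_j$, while appending a final part $1$ to $(\alpha_1,\ldots,\alpha_j)$ identifies the inner sum of the second family with $B_{j+1}$. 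After shifting the index in the second sum this yields
$$\eta_m(e_k h_{\ell-k}) = \sum_{j=k}^{\ell}{j\choose k}A_j + \sum_{j=k+1}^{\ell+1}{j-1\choose k}B_j.$$

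Finally I would expand the target right-hand side the same way, rewriting its second sum as $\sum_{j=k+1}^{\ell+1}{j-1\choose k}(A_j+B_j)$. The $B_j$-terms then match term by term; for the $A_j$-terms I would combine the coefficient ${j-1\choose k-1}$ coming from the target's first sum with ${j-1\choose k}$ coming from its second sum and apply Pascal's rule ${j-1\choose k-1}+{j-1\choose k}={j\choose k}$ to recover ${j\choose k}$ for $k+1\le j\le\ell$, the boundary term $j=k$ contributing ${k-1\choose k-1}=1={k\choose k}$ and the term $j=\ell+1$ contributing nothing because $A_{\ell+1}=0$ (the only composition of $\ell+1$ into $\ell+1$ parts has every part equal to $1$). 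This reproduces the displayed expression for $\eta_m(e_k h_{\ell-k})$ and finishes the argument. The only delicate point is the bookkeeping of the last-part decomposition together with the two boundary terms; once those are tracked carefully the identity is purely combinatorial.
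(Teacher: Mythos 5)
Your proposal is correct and takes essentially the same route as the paper: both identify the series as $\eta_m(e_kh_{\ell-k})$, specialize the expansion derived just before the proposition at $p=m$, decompose $E^{(m)}_{\ell+1,j}(z)$ according to whether the last part of the composition of $\ell+1$ is $1$ or at least $2$ (your $A_j+B_j$ splitting), and conclude with Pascal's rule, which the paper writes in the equivalent form ${j\choose k}-{j-1\choose k}={j-1\choose k-1}$. Your explicit tracking of the boundary cases ($j=k$ giving ${k-1\choose k-1}=1$ and $A_{\ell+1}=0$) corresponds exactly to the terms $\zeta(m\alpha_1,\ldots,m\alpha_k+m;z)$ and ${\ell\choose k}\zeta(\{m\}^{\ell+1};z)$ that the paper isolates in its display.
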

\begin{proof}
\begin{eqnarray*}
\eta_m(e_kh_{\ell-k}) &=& 
\sum_{|\bm\alpha|=\ell}\left\{
\zeta(m\alpha_1,\ldots,m\alpha_k+m;z)
+\sum^\ell_{j=k+1}\left[
{j-1\choose k}\zeta(m\alpha_1,\ldots,m\alpha_{j-1},m;z)\right.\right.\\
&&\qquad\qquad\left.\left.
+{j\choose k}\zeta(m\alpha_1,\ldots,m\alpha_j+m;z)\right]\right\}
+{\ell\choose k}\zeta(\{m\}^{\ell+1};z).
\end{eqnarray*}
Since
$$
E^{(m)}_{\ell+1,j}(z)=\sum_{|\bm\alpha|=\ell}
\left[\zeta(m\alpha_1,\ldots,m\alpha_{j-1},m;z)+\zeta(m\alpha_1,\ldots,m\alpha_j+m;z)\right],
$$
and ${j\choose k}-{j-1\choose k}={j-1\choose k-1}$, we complet the proof.
\end{proof}

In general, we combine Lemma \ref{lma.22}, Theorem \ref{thm.34}, and Theorem \ref{thm.44}, then
we get our main theorem.
\begin{theorem}
For $u\in\QSym$ and $(s_1,\ldots,s_k)\in\mathbb N_0^k$ with 
$s_1+\cdots+s_k\geq 2$, the $H$-function $\eta_{s_1,\ldots,s_k}(u)$
can be explicitly expressed as a linear combination of multiple Hurwitz zeta functions
and some speical values of $H_n^{(m)}(z)$.
\end{theorem}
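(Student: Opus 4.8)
The plan is to assemble the Main Theorem from the three structural results already proved, reducing everything to the two admissible base types $\eta_{0^a,p}$ and $\eta_{0^a,1,0^b,1}$. Since $\eta_{s_1,\ldots,s_k}$ is $\mathbb Q$-linear in its argument and $\QSym$ is spanned over $\mathbb Q$ by the monomial quasi-symmetric functions $M_I$, it suffices to prove the statement when $u=M_I$ for an arbitrary composition $I=(\alpha_1,\ldots,\alpha_k)$; an arbitrary $u$ is then handled term by term, and a finite $\mathbb Q$-combination of linear combinations of multiple Hurwitz zeta functions and special values of $H_n^{(m)}(z)$ is again of the same form.

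First I would reduce an arbitrary index $(s_1,\ldots,s_k)\in\mathbb N_0^k$ with $s_1+\cdots+s_k\geq 2$ to the two base types by induction on the weight $w=s_1+\cdots+s_k$, where throughout one may delete trailing zero exponents since a factor $(n+j+z)^0$ equals $1$ and does not affect $\eta$. If at most one coordinate is positive, then $\eta_{s_1,\ldots,s_k}=\eta_{0^a,p}$ with $p=w\geq 2$, which is already a base type. If at least two coordinates are positive and $w=2$, the nonzero pattern is two $1$'s, so $\eta_{s_1,\ldots,s_k}=\eta_{0^a,1,0^b,1}$, again a base type. In the remaining case at least two coordinates $s_i,s_j$ with $i<j$ are positive and $w\geq 3$, so Lemma \ref{lma.22} applies and expresses $\eta_{s_1,\ldots,s_k}(M_I)$ as a rational combination of two $H$-functions of weight $w-1$, to which the inductive hypothesis applies. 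The worked identity $\eta_{2,3,2}=\eta_{0,1,1}-\eta_{1,1}+\frac14\eta_2+\eta_{0,3}-\frac14\eta_{0,0,2}$ displayed above is a concrete instance of this reduction.

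Having reduced to the two base types, I would then invoke the evaluations already established: Theorem \ref{thm.34} expresses $\eta_{0^a,1,0^b,1}(M_I)$, and Theorem \ref{thm.44} expresses $\eta_{0^a,p}(M_I)$, each as a linear combination of multiple Hurwitz zeta functions and special values of $H_n^{(m)}(z)$ (the former via Theorems \ref{thm.32} and \ref{thm.33}, the latter via Theorems \ref{thm.41}, \ref{thm.42}, and \ref{thm.43}). Combining this with the reduction of the previous paragraph yields the claim for $\eta_{s_1,\ldots,s_k}(M_I)$, and hence for every $u\in\QSym$.

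The main obstacle is not analytic but combinatorial bookkeeping: verifying that the repeated application of Lemma \ref{lma.22} terminates precisely in the two admissible base types and in nothing else. The point that makes this work is that each application strictly lowers the weight by $1$ while keeping all entries nonnegative, so the induction on $w$ is well founded; one then only has to check that the terminal configurations (weight $2$, or a single positive coordinate) are exactly $\eta_{0^a,2}$, $\eta_{0^a,1,0^b,1}$, and $\eta_{0^a,p}$, together with the harmless normalization that trailing zero exponents may be dropped.
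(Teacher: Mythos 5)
Your proposal is correct and follows essentially the same route as the paper, whose proof is the one-line combination of Lemma \ref{lma.22} (to reduce any $\eta_{s_1,\ldots,s_k}$ to the two base types $\eta_{0^a,p}$ and $\eta_{0^a,1,0^b,1}$) with Theorem \ref{thm.34} and Theorem \ref{thm.44}. Your induction on the weight $w=s_1+\cdots+s_k$, together with the normalization dropping trailing zeros, simply makes explicit the termination bookkeeping that the paper leaves implicit.
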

\section{Other Formulas and Applications}
Another interesting results are 
$$
\sum^\infty_{n=1}\frac{H_n^{(2)}}{{n+k\choose k}}
=\frac k{k-1}\left(\zeta(2)-H_{k-1}^{(2)}\right),
$$
where $k\geq 2$.
This result was first appeared in \cite[Conjecture 2.5]{Sof}.
In fact the above identity is just a formula obtained by evaluated $\eta_{0,1^k}(p_1)$ with $m=2$, $z=0$.

For nonnegative integers $a$, $c$ and using the mathematical induction
we get the following equations:
\begin{equation}\label{eq.51}
\eta_{0^a,1^{c+2}} = \frac1{(c+1)!}\sum^{c}_{r=0}(-1)^r{c\choose r}\eta_{0^{r+a},1^2}.
\end{equation}
Let $m=1=a$, $z=0$, we have the following identitiy \cite[Theorem 6]{Hof1}
\begin{example}\label{eg.51}
For a positive integer $k$ and a nonnegative integer $c$, we have
$$
\eta_{0,1^{c+2}}(e_k)=\sum^\infty_{n=1}
\frac{P_k(H_n,\ldots,(-1)^{k+1}H_n^{(k)})}{(n+1)(n+2)\cdots(n+c+2)}
=\frac1{(c+1)!}\cdot\frac1{(c+1)^{k+1}}.
$$
\end{example}
\begin{proof}
Using Eq.\,(\ref{eq.51}) and Proposition \ref{pro.35} we have
\begin{eqnarray*}
\lefteqn{(c+1)!\sum^\infty_{n=1}\frac{P_k(H_n,-H_n^{(2)},\ldots,(-1)^{k+1}H_n^{(k)})}
{(n+1)(n+2)\cdots(n+c+2)}}\\
&=& \sum^c_{r=0}{c\choose r}(-1)^r\left[\sum_{1\leq r_{k-1}\leq \cdots\leq r_0=r+1}
\frac{H_{r_{k-1}}}{\prod^{k-1}_{p=0}r_p}\right] \\
&=&\sum_{1\leq r_k\leq r_{k-1}\leq \cdots\leq r_0\leq c+1}
\frac{(-1)^{r_0+1}{c\choose r_0-1}}{r_0r_1\cdots r_k}.
\end{eqnarray*}
Since 
$$
{c\choose r_0-1}={c+1\choose r_0}-{c\choose r_0}
$$
and a known identity (ref. \cite{BDWLH})
\begin{equation}\label{eq.52}
H_c^{(k+1)}=\sum_{1\leq r_k\leq r_{k-1}\leq \cdots\leq r_0\leq c}
\frac{(-1)^{r_0+1}{c\choose r_0}}{r_0r_1\cdots r_k},
\end{equation}
The last equation becomes
$$
H_{c+1}^{(k+1)}-H_c^{(k+1)}=\frac1{(c+1)^{k+1}}.
$$
Thus we complete the proof.
\end{proof}
We can rewrite the identity in Example \ref{eg.51} as the following.
\begin{equation}\label{eq.53}
\sum^\infty_{n=1}\frac{P_\ell(H_n,-H_n^{(2)},\ldots,(-1)^{\ell+1}H_n^{(\ell)})}
{{n+k\choose k}}=\frac{k}{(k-1)^{\ell+1}},
\end{equation}
where $\ell\geq 1$, $k\geq 2$ are integers.

Similarly we set $m=1$, $a=z=0$ in Eq.\,(\ref{eq.51}), 
then the evaluation of the formula $\eta_{1^{c+2}}(e_k)$ gives the 
following formula \cite[Corollary 6]{Hof1}:
$$
(c+1)!\sum^\infty_{n=1}\frac{P_k(H_n,-H_n^{(2)},\ldots,(-1)^{k+1}H_n^{(k)})}
{n(n+1)\cdots(n+c+1)}=\zeta(k+1)-H_c^{(k+1)}.
$$
Again we can rewrite the above identity as the following.
\begin{equation}\label{eq.54}
\sum^\infty_{n=1}\frac{P_\ell(H_n,-H_n^{(2)},\ldots,(-1)^{\ell+1}H_n^{(\ell)})}
{n{n+k\choose k}}=\zeta(\ell+1)-H_{k-1}^{(\ell+1)},
\end{equation}
where $\ell\geq 1$, $k\geq 2$ are integers.

Now we apply our method to get the identity \cite{SS}
which we mention in the first section:
$$
\sum^\infty_{n=1}\frac{H_n}{n^4{n+5\choose 5}}
=3\zeta(5)-\zeta(3)\zeta(2)-\frac{137}{48}\zeta(4)+\frac{12019}{1800}\zeta(3)
-\frac{874853}{216000}\zeta(2)+\frac{131891}{172800}.
$$
Since 
\begin{eqnarray*}
\eta_{4,1^5} &=&
\frac1{120}\eta_4-\frac{137}{7200}\eta_3+\frac{12019}{432000}\eta_2
-\frac{12019}{432000}\eta_{1^2} \\
&&\quad-\frac{3799}{432000}\eta_{1^3}-\frac{1489}{216000}\eta_{1^4}
-\frac{61}{8000}\eta_{1^5}-\frac1{125}\eta_{1^6}.
\end{eqnarray*}
Let $m=1$ and $z=0$.
Applying Theorem \ref{thm.41} we have $\eta_p(e_1)=\zeta(p+1)+\zeta(1,p)$. Using the following 
known results:
$$
\zeta(1,4)=2\zeta(5)-\zeta(2)\zeta(3),\quad
\zeta(1,3)=\frac14\zeta(4),\quad
\zeta(1,2)=\zeta(3),
$$
we then get the desired result.

In the end of this section we consider the following identity \cite[Eq.\,(2.3)]{Sof2}:
\begin{eqnarray*}
\lefteqn{\sum^\infty_{n=1}\frac{H_n^{(m)}}{n{n+k\choose k}}
=\zeta(m+1)+\sum^k_{r=1}(-1)^{r+1}{k\choose r}}\\
&\times&\left[\sum^{r-1}_{j=1}\frac{(-1)^{m+1}}{j^m}H_j
+\sum^m_{\ell=2}(-1)^{m-\ell}H_{r-1}^{(m+1-\ell)}\zeta(\ell)\right].
\end{eqnarray*}
We apply our Main Theorem to evaluate 
this infinite series $\sum^\infty_{n=1}\frac{H_n^{(m)}}{n{n+k\choose k}}$.
Then we obtain a more efficient formula:
\begin{example}
For a pair of positive integers $k$ and $m$, we have
$$
\sum^\infty_{n=1}\frac{H_n^{(m)}}{n{n+k\choose k}}
=\zeta(m+1)+\sum^{k-1}_{r=1}(-1)^r{k-1\choose r}
\left[\frac{(-1)^{m-1}}{r^m}H_r+\sum^m_{\ell=2}\frac{(-1)^{m-\ell}}{r^{m+1-\ell}}\zeta(\ell)\right].
$$
\end{example}
\begin{proof}
Using Eq.\,(\ref{eq.51}) and set $z=0$, we have
\begin{eqnarray*}
\sum^\infty_{n=1}\frac{H_n^{(m)}}{n{n+k\choose k}}
&=& k!\eta_{1^{k+1}}(e_1)\ =\ \sum^{k-1}_{r=0}(-1)^r{k-1\choose r}\eta_{0^r,1^2}(e_1) \\
&=&\eta_{1,1}(e_1)+\sum^{k-1}_{r=1}(-1)^r{k-1\choose r}\eta_{0^r,1^2}(e_1).
\end{eqnarray*}
By Theorem \ref{thm.32}, $\eta_{1,1}(e_1)=\zeta(m+1)$. Applying Theorem \ref{thm.33} (1), 
for $r\geq 1$ we have
$$
\eta_{0^r,1^2}(e_1)=\frac{(-1)^{m-1}}{r^m}H_r+\sum^{m-2}_{\ell=0}\frac{(-1)^\ell}{r^{\ell+1}}\zeta(m-\ell).
$$
Substitute these two identities into the last formula, we obtain the desired result.
\end{proof}

\section*{Acknowledgment}
The author was funded by the Ministry of Science and Technology, Taiwan, R.O.C.,
under Grant MOST 107-2115-M-845-003.

\end{document}